\newtheorem{thm}[equation]{Theorem}
\newtheorem{cor}[equation]{Corollary}
\newtheorem{prop}[equation]{Proposition}
\newtheorem*{conj*}{Conjecture}
\theoremstyle{definition}
\newtheorem{remark}[equation]{Remark}
\newtheorem{exam}[equation]{Example}
\numberwithin{equation}{section}
\newcommand{\FF}{\mathbb{F}}  
\newcommand{\ZZ}{\mathbb{Z}}
\newcommand{\A}{\mathsf{A}} 
\newcommand{\M}{\mathsf{M}} 
\newcommand{\Dom}{\operatorname{Dom}} 
\newcommand{\SD}[1]{\mathsf{SD}_{#1}} 
\newcommand{\PD}[1]{\mathsf{PD}_{#1}} 
\newcommand{\T}[1]{\mathsf{T}_{#1}} 
\newcommand{\UT}[1]{\mathsf{UT}_{#1}}   
\newcommand\chara{\mathsf {char}}
\newcommand{\modd}{\mathsf{mod\,}}  
\newcommand{\ev}{\operatorname{ev}_h}
\newcommand{\eval}[2]{\left. #1 \right|_{#2}} 
\newcommand{\ch}{\operatorname{ch}}           
\newcommand{\alltrees}{\mathbb{T}}            
\newcommand{\pS}[2]{\left\lbrace #1 \atop #2\right\rbrace} 
\newcounter{marg}[section]
\newcommand\rsout{\bgroup\markoverwith{\textcolor{red}{\rule[0.5ex]{2pt}{0.4pt}}}\ULon}
\title{Normally ordered forms of powers of differential operators and their combinatorics}
\author[1]{Emmanuel Briand\thanks{Partially supported by MTM2016-75024-P and FEDER, and Junta de Andalucia under grants P12-FQM-2696 and FQM-333.}}
\author[2]{Samuel A.\ Lopes\thanks{Partially supported by CMUP (UID/MAT/00144/2013), which is funded by FCT (Portugal) with national (MEC) and European structural funds (FEDER), under the partnership agreement PT2020.}}
\newcommand\CoAuthorMark{\footnotemark[1]}
\author[3]{Mercedes Rosas\protect\CoAuthorMark}
\affil[1]{Departamento Matem\'atica Aplicada I, Universidad de Sevilla.}
\affil[2]{CMUP, Faculdade de Ci\^encias da
Universidade do Porto, Rua do Campo Alegre 687,
4169-007 Porto (Portugal).}
\affil[3]{Departamento de \'Algebra, Universidad de Sevilla.}
\date{}
\begin{document}

\maketitle

\begin{abstract}
We investigate the combinatorics of the general formulas for the powers of the operator $h \partial^k$, where $h$ is a central element of a ring and $\partial$ is a differential operator. This generalizes previous work on the powers of operators $h \partial$. New formulas for the generalized Stirling numbers are obtained.
\end{abstract}

\section{Introduction}\label{S:intro}

There exist \textit{universal} polynomials $U_n$ for computing the normally ordered form of $(h\partial)^n$, where $\partial$ is a derivation of a (noncommutative) ring $A$ and $h$ is a central element of $A$. For instance, since, as operators on $A$, we have:
\begin{align*}
(h \partial)^2  & = h \partial(h) \partial + h^2 \partial^2, \\  
(h \partial)^3  & = h (\partial(h))^2 \partial + h^2 \partial^2(h) \partial + 3 h^2 \partial(h) \partial^2 + h^3 \partial^3, 
\end{align*}
the polynomials $U_2$ and $U_3$ are 
\begin{align*}
U_2 & = y_0 y_1 t + y_0^2 t^2,\\
U_3 & = y_0 y_1^2 t + y_0^2 y_2 t  + 3 y_0^2 y_1 t^2 + y_0^3 t^3, 
\end{align*}
so that $(h \partial)^n$ is obtained from $U_n$ by replacing $t$ with $\partial$ and $y_k$ with $\partial^k (h)$.

These polynomials $U_n$ have been studied as early as 1823 by Scherk in his doctoral dissertation \cite[para.\ 8]{Scherk}, taking $\partial=\frac{d}{dx}$, including the calculation of $U_n$ up to $n=5$ (see also the account of Scherk's dissertation in \cite[appx.\ A]{BlasiakFlajolet}). Scherk's results have been revisited or rediscovered in recent times in \cite{Comtet,BergeronReutenauer,MohammadNoori}, as pointed out in the detailed survey \cite{MansourSchork}.  The coefficients occurring in $U_n$ form a family $c^n_\lambda$ of nonnegative integers indexed by partitions $\lambda$ and displaying very interesting combinatorial properties .  To whet the reader's interest, Table~\ref{table:c} below lists the values $c^n_{\lambda}$ for $n\leq 5$.

In this paper, after reviewing in detail formulas and properties of the polynomials $U_n$ and their coefficients $c^n_{\lambda}$, we generalize these results to a broader family of polynomials $U_{n,d}$, providing the normally ordered form of the operator $(h \partial^d)^n$. We also apply the $U_{n, d}$ to the theory of formal differential operator rings.

The motivation for this work comes from \cite{blo13}, where the powers of the derivation $h\frac{d}{dx}$ appear in the formulas for the action of a certain subalgebra $\A_h$ of the Weyl algebra $\A_1$ on its irreducible modules. 
In \cite[Sec.\ 8]{blo13}, some properties of the expression $U_n$ for the normally ordered form of $h\frac{d}{dx}$ were rediscovered.

\paragraph{Organization of the paper.}
Section \ref{S:univpolys} defines formally the polynomials $U_n$ and presents their main properties. Next, in Section \ref{S:combint}, several simple descriptions of the polynomials $U_n$ are provided, each leading naturally to the next one. The first description is an ``umbral formula" for $U_n$, i.e., a formula from which $U_n$ is obtained by applying a linear map $x^{\alpha}\mapsto y_{\alpha}$ that``lowers" exponents to turn them into indices (Section \ref{SS:combint:umbral}). The summands happen to be naturally indexed by simple combinatorial objects (subdiagonal maps). This gives an explicit presentation of the formula (Section \ref{SS:combint:esdm}). In turn, these combinatorial objects encode increasing trees, which gives another presentation and interpretation of the formula  (Section \ref{SS:combint:it}). Increasing trees can be gathered according to their shapes; this provides a more compact way to describe $U_n$ (Section \ref{SS:combint:ut}). While most of these descriptions appear in previous works, we think we have been able to give here a rather clear presentation for them and explain well the natural connections between them. Section \ref{S:combint} also prepares and motivates the generalizations of Section \ref{S:gen}.

After that, we review in Section \ref{S:combspecial} some remarkable specializations of the polynomials $U_n$ and their coefficients: Eulerian polynomials, Stirling numbers of both kinds and generalized Stirling numbers show up. This motivates the study of the coefficients $c^n_\lambda$ of $U_n$ modulo a prime number and we prove one such result in case $n$ is a power of this prime (Theorem~\ref{T:modp}), thus generalizing known results on the modular behavior of Stirling numbers. 

Section \ref{SS:diagrams} provides a simple combinatorial interpretation of the coefficients of the polynomials $U_n$. 
Following that, Section \ref{S:gen} generalizes the descriptions of the polynomials $U_n$ given in Section \ref{S:combint} and the combinatorial interpretations of their coefficients to a family of polynomials $U_{n,d}$ related to the operator $(h\partial^d)^n$. There, generalizations of Comtet's formula for the coefficients of $U_n$ are obtained (Theorem \ref{C:closed:form:d}). Also, as a byproduct, a new formula and a new combinatorial interpretation of the generalized Stirling numbers are derived (Section \ref{SS:g S}). 

Lastly, in Section \ref{S:nord:fdor}  it is shown how the polynomials $U_{n,d}$ provide the normally ordered form of elements in formal differential operator rings $A[z; \partial]$, including as special cases the Weyl algebra $A_1$ and its family of subalgebras $\A_h$ studied in \cite{blo13, blo15ja, blo15tams}. 

\begin{table}[htbp]
\[
\begin{array}{c|*{12}{c}} 
\nu & {\emptyset} & {\Yboxdim{4pt} \yng(1)} & {\Yboxdim{4pt} \yng(2)} & {\Yboxdim{4pt} \yng(1,1)} & {\Yboxdim{4pt} \yng(3)} & {\Yboxdim{4pt} \yng(2,1)} & {\Yboxdim{4pt} \yng(1,1,1)} & {\Yboxdim{4pt} \yng(1,1,1,1)}& {\Yboxdim{4pt} \yng(2,1,1)}& {\Yboxdim{4pt} \yng(2,2)}& {\Yboxdim{4pt} \yng(3,1)}& {\Yboxdim{4pt} \yng(4)}\\[5pt]
 \hline
 c^1_{\nu}&1 &&&&&&&&&&&\\[0.1cm]
 c^2_{\nu}&1 &1  &&&&&&&&&&\\[0.1cm]
 c^3_{\nu}&1 &3  &1 &1   &&&&&&&&\\[0.1cm]
 c^4_{\nu}&1 &6  &4 &7   &1&4  &1   &&&&&\\[0.1cm]
 c^5_{\nu}&1 &10 &10& 25 &5& 30& 15 &1&11&4&7&1
\end{array}
\]
\caption{The coefficients $c^n_{\nu}$ of the polynomials $U_n$ for $n$ up to $5$. Each partition $\nu$ is represented by its Young diagram. This table conceals the signless Stirling numbers of the first kind (as the sums $\sum_{\nu \vdash n-k} c^n_{\nu}$), the Stirling numbers of the second kind (as the coefficients $c^{n}_{(1^{n-k})}$ indexed by one-column shapes) and the Eulerian polynomials (whose coefficients are the sums $\sum_{\ell(\nu)=k} c^n_{\nu}$).}\label{table:c}
\end{table}

\paragraph{Notations.} Throughout the paper, $\ZZ$ and $\ZZ_{\geq 0}$ 
denote the sets of integers and nonnegative integers, respectively. 
For $n\in \ZZ_{\ge 0}$ , define $[n]=\{ 1, \ldots, n\}$, so in particular $[0]=\emptyset$. Given $k \in \ZZ_{\ge 0}$, the notation $(q)_k$ stands for the \emph{falling factorial} $q(q-1)\cdots(q-k+1)$.

An integer partition $\lambda$ is a weakly decreasing sequence $\lambda_1\geq \cdots\geq \lambda_{\ell}>0$ whose terms are called the parts of $\lambda$.
If the sum of the parts of $\lambda$ is $k$, then we write $|\lambda|=k$ or $\lambda \vdash k$.
The number $\ell$ of parts of $\lambda$ is the \emph{length} of $\lambda$, denoted $\ell(\lambda)$.

Our rings are assumed to be associative and unital, but not necessarily commutative. A derivation of a ring $A$ is an additive endomorphism $\partial$ of $A$ satisfying the Leibniz rule 
\begin{equation*}
\partial(ab)=\partial(a)b+a\partial(b), \quad \forall a, b \in A. 
\end{equation*}
In case $A$ is an algebra over a field $\FF$, it is further assumed that $\partial$ is linear. 
If $A=D[x]$ is a polynomial ring over a ring $D$, we denote its derivation $\frac{d}{dx}$ by $\partial_x$, so that $\partial_x$ is zero on $D$ and $\partial_x(x)=1$. For $h\in A$, we denote $\partial^i(h)$ by $h^{[i]}$, reserving the classical notations $h', h''$, etc.\ and $h^{(i)}$ for the special case $\partial=\partial_x$. In particular, $h^{[0]}=h$. Given an integer partition $\lambda=(\lambda_1,\lambda_2,\ldots,\lambda_\ell)$, we define $h^{[\lambda]}=\prod_{i=1}^\ell h^{[\lambda_i]}$. In the same spirit, given commuting variables $y_1$, $y_2$, $y_3, \ldots$, we define $y_{\lambda}=\prod_{i=1}^\ell y_{\lambda_i}$. In case $\lambda=\emptyset$ (the empty partition), then $y_\emptyset=1=h^{[\emptyset]}$.

\paragraph{Acknowledgments.} S.\ Lopes would like to thank G.\ Benkart and M. Ondrus for the discussions motivating the topic of this paper. He would also like to express his gratitude for the hospitality received at the Universidad de Sevilla during his visit at the onset of this collaboration.

\section{The universal polynomials $U_n$ and their coefficients $c_{\lambda}^n$}\label{S:univpolys}

Let $A$ be an arbitrary (noncommutative) ring. For $h\in A$, the map $h\partial$ is not in general a derivation, except if $h$ is central in $A$. \textbf{Thus, we assume throughout that $h$ is central in $A$.} 

\begin{remark}
Note that the derivation $\partial$ stabilizes the center of $A$. 
In particular, $h$ central implies that $\partial^i(h)$ and $\partial^j(h)$ commute, for all $i, j\geq 0$. In fact, all of our results in this paper hold if we replace the hypothesis that $h$ is central with the weaker hypothesis that  $\partial^i(h)$ and $\partial^j(h)$ commute, for all $i, j\geq 0$. 
\end{remark}

We will show next that there is a \textit{universal} polynomial $U_n$ in the variables $y_0, \ldots, y_{n-1}$ and $t$ such that, upon substituting $y_i=h^{[i]}\in A$ and $t=\partial$, we get the operator $\left(h\partial\right)^n$. By this we mean that we write $\left(h\partial\right)^n$ as a sum of terms of the form $a_k\partial^k$ with $a_k$ a monomial in the $h^{[i]}$, for $i\geq 0$, but otherwise independent of $A$ and $\partial$ (hence the term ``universal''). 

To make the above precise we introduce some definitions. Let $R=\ZZ[y_i; i\geq 0]$ be the integral commutative polynomial ring in the variables $\{ y_i\}_{i\geq 0}$. As a free abelian group, $R$ has a basis given by the monomials in the variables $y_i$. Care is needed when doing the substitution $t=\partial$, as $\partial$ does not commute with multiplication (e.g., $h\partial_x$ and $\partial_x h$ are different operators on $\ZZ[x]$). Thus, let $R\langle t \rangle$ be the ring obtained from $R$ by adjoining a new variable $t$ which does not commute with the variables $y_i$; in other words, $R\langle t \rangle$ is the unital ring generated over $\ZZ$ by $\{ y_i\}_{i\geq 0}$ and $t$, subject only to the relations $y_i y_j=y_j y_i$ for all $i, j\geq 0$. Let $\Delta$ be the derivation of $R$ defined on the generators by $\Delta(y_i)=y_{i+1}$ for all $i\geq 0$. Specifically, $\Delta=\sum_{i\geq 0}y_{i+1}\partial_{y_i}$. This map can be extended uniquely to a derivation of $R\langle t \rangle$, still denoted $\Delta$, satisfying $\Delta(t)=0$.

The polynomials $U_n$ are defined recursively, as elements of $R\langle t \rangle$, as follows:
\begin{equation}\label{E:univpolys:recdef}
U_0=1 \quad \text{and} \quad 
\forall n \geq 0,\  U_{n+1} = y_0 (\Delta + \rho_t) U_{n}=y_0 U_n t + y_0 \Delta(U_n),
\end{equation}
where $\rho_t$ denotes the right multiplication by $t$ operator on $R\langle t \rangle$.
It is clear from the definition that $U_n=(y_0 \Delta + y_0 \rho_t)^n (1)$, and that this polynomial depends only on the variables $y_0, \ldots, y_{n-1}$ and $t$. Also, for $n\geq 1$, we can write $U_n=\sum_{i=1}^{n}P_{n, i} t^i$, for some $P_{n, i}\in R$. Moreover, it can be seen that $U_n$ is homogeneous of degree $n$ relative to two different gradings on $R\langle t \rangle$:
\begin{enumerate}[label=\textup{(\roman*)}]
\item the grading in which $y_i$ has degree $1$ and $t$ has degree $0$;
\item the grading in which $y_i$ has degree $i$ and $t$ has degree $1$.
\end{enumerate}

With this we can show that the polynomials $U_n$ determine the normally ordered form of $\left(h\partial\right)^n$, for any ring $A$ and derivation $\partial$, and as such are universal, in an appropriate sense.

\begin{thm}\label{T:univpolys:univprop}
For any $n\geq 0$, there is a unique polynomial $U_n\in \bigoplus_{i\geq 0}Rt^i\subseteq R\langle t \rangle$ such that, for any ring $A$, derivation $\partial$ of $A$ and central element $h$ in $A$,
\begin{equation}\label{E:univpolys:univprop}
\left(h\partial\right)^n 
= \eval{U_n}{y_i=h^{[i]},\, t=\partial}=U_n(h, h^{[1]}, h^{[2]}, \ldots; \partial),
\end{equation}
as endomorphisms of $A$.
\end{thm}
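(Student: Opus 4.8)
The plan is to prove existence and uniqueness separately, with existence handled by induction on $n$ using the recursion \eqref{E:univpolys:recdef}.

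For uniqueness, I would exhibit a single ring/derivation/element triple $(A,\partial,h)$ in which the universal monomials in the $h^{[i]}$ and powers of $\partial$ act as linearly independent operators, so that the identity \eqref{E:univpolys:univprop} pins down $U_n$ uniquely as an element of $\bigoplus_i R t^i$. The natural candidate is $A=\FF[x_0,x_1,x_2,\dots]$, a polynomial ring in infinitely many variables over a field of characteristic zero, with $\partial$ the derivation sending $x_i\mapsto x_{i+1}$, and $h=x_0$; then $h^{[i]}=x_i$, and the ``abstract'' variables $y_i$ are realized by algebraically independent elements. One checks that distinct monomials $a_k\partial^k$ (with $a_k$ a monomial in the $x_i$) are linearly independent as operators on $A$—for instance by evaluating at suitable monomials in the $x_i$ and tracking degrees, or by noting that the leading behavior of $\partial^k$ on a high power of $x_j$ cannot be cancelled. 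Hence at most one $U_n$ can satisfy the required identity.

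For existence, I would induct on $n$. The base case $n=0$ is immediate: $U_0=1$ and $(h\partial)^0=\mathrm{id}$. For the inductive step, assume $U_n\in\bigoplus_i Rt^i$ satisfies \eqref{E:univpolys:univprop} for all triples $(A,\partial,h)$. Write $U_n=\sum_i P_{n,i}\,t^i$ with $P_{n,i}\in R$, so that $(h\partial)^n=\sum_i P_{n,i}(h,h^{[1]},\dots)\,\partial^i$ as operators on $A$. Now compute $(h\partial)^{n+1}=h\partial\circ(h\partial)^n$. The key computation is the commutator identity: for a central element $h$ and any operator of the form $a\partial^i$ with $a\in A$, one has $h\partial\circ(a\partial^i) = ha\,\partial^{i+1} + h\,\partial(a)\,\partial^i$, which follows from the Leibniz rule since $\partial(a\partial^i(f)) = \partial(a)\partial^i(f) + a\partial^{i+1}(f)$. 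Applying this to each term $a = P_{n,i}(h,h^{[1]},\dots)$ and using that $\partial$ applied to a polynomial expression in the $h^{[j]}$ matches $\Delta$ applied to the corresponding polynomial in the $y_j$ (because $\partial(h^{[j]})=h^{[j+1]}$ mirrors $\Delta(y_j)=y_{j+1}$, and $\partial$ is a derivation just as $\Delta$ is), we get exactly $(h\partial)^{n+1} = \eval{y_0 U_n t + y_0\Delta(U_n)}{y_i=h^{[i]},\,t=\partial} = \eval{U_{n+1}}{y_i=h^{[i]},\,t=\partial}$. Since $U_{n+1}=y_0 U_n t + y_0\Delta(U_n)$ again lies in $\bigoplus_i Rt^i$, the induction goes through.

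The main obstacle—really the only subtle point—is making precise the claim that ``evaluation'' $y_i\mapsto h^{[i]}$, $t\mapsto\partial$ is compatible with the derivation $\Delta$: one must check that for $P\in R$, $\partial\bigl(P(h,h^{[1]},\dots)\bigr) = (\Delta P)(h,h^{[1]},\dots)$ as elements of $A$. This holds because both sides are derivations (in the appropriate sense) in $P$ agreeing on generators $y_i$, using crucially that the $h^{[i]}$ commute with one another (guaranteed by $h$ central, as noted in the Remark) so that $P(h,h^{[1]},\dots)$ is well-defined and the chain rule applies. Once this compatibility is established, the inductive step is a direct unwinding of definitions, and combined with the uniqueness argument the theorem follows.
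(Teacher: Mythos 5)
Your proposal is correct and follows essentially the same route as the paper: existence by induction on $n$ via the compatibility (``chain rule'') identity $\partial\circ\ev(w)=\ev((\Delta+\rho_t)(w))$, and uniqueness by specializing to the generic example where the $h^{[i]}$ are algebraically independent (the paper uses $A=R$, $h=y_0$, $\partial=\Delta$ and applies the resulting operator to a variable $y_k$ with $k$ large, which is exactly the cleanest instance of your ``evaluate at suitable monomials'' suggestion).
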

\begin{proof}
Formally, the evaluation on \eqref{E:univpolys:univprop} is the result of applying to $U_n$ the ring homomorphism $\ev:R\langle t \rangle\longrightarrow \operatorname{End}(A)$ sending $y_i$ to $h^{[i]}$, seen as left multiplication by $h^{[i]}$, and $t$ to $\partial$. This is a well-defined map since the elements $h^{[i]}$ commute. Moreover, 
\begin{equation}\label{E:univpolys:skewrel}
\partial\circ\ev(w)=\ev ((\Delta+ \rho_t)(w))
\end{equation}
holds in $\operatorname{End}(A)$, for all $w\in R\langle t \rangle$. (This could be seen as a version of the chain rule, adapted for our context.)
Indeed, using the additive and multiplicative properties of $\ev$ and $\Delta$ we see that it suffices to check this identity on a generating set of $R\langle t \rangle$. In case $w=t$, the identity is trivial because $\Delta(t)=0$; in case $w=y_i$ we obtain the equality $\partial\circ h^{[i]}=h^{[i+1]}+h^{[i]}\circ\partial$, where, as before, $h^{[j]}$ is seen as left multiplication by $h^{[j]}$. Thus, the latter equality follows from the fact that $\partial$ is a derivation of $A$.

It is now an easy matter to prove \eqref{E:univpolys:univprop} by induction on $n$, using \eqref{E:univpolys:skewrel}. The base case is clear, as $U_0=1$, and for the inductive step we have 
\begin{align*}
(h \partial)^{n+1}&=h\circ\partial\circ \ev(U_n)\\
&=h\circ\ev((\Delta+\rho_t)(U_n)) \\
&=\ev(y_0\Delta(U_n)+y_0 U_n t)\\
&=\ev(U_{n+1}).
\end{align*} 

To show uniqueness, suppose $V_n\in \bigoplus_{i\geq 0}Rt^i$ satisfies \eqref{E:univpolys:univprop}. Take $A=R$, $h=y_0$ and $\partial=\Delta$, so that $\partial(y_i)=y_{i+1}$ for all $i\geq0$. Let $T=U_n-V_n$. Then we can write $T=\sum_{i=0}^{m}P_{i} t^i$, for some $m\geq 0$ and $P_{i}\in R$. Thus, as in $R$ we have $h^{[i]}=\Delta^i(y_0)=y_i$, we obtain $0=\eval{T}{y_i=h^{[i]},\, t=\Delta}=\sum_{i=0}^{m}P_{i} \Delta^i$. Applying this operator to $y_k$, where $k$ is large enough so that no $y_j$ with $j\geq k$ occurs in any of the $P_i$, we conclude that $\sum_{i=0}^{m}P_{i} y_{k+i}=0$, from which follows that $P_i=0$ for all $i$ and $V_n=U_n$.
\end{proof}

Our next goal is to set a recurrence relation for the coefficients of $U_n$.

\begin{prop}[{\cite[par.\ 8.I.1]{Scherk}, \cite[Thm.\ 1]{MohammadNoori}, \cite[Sec.\ 8]{blo13}}]\label{P:recrel}
Assume $n\geq 1$. There exist positive integers $c^n_\lambda$, where $\lambda$ runs through the set of integer partitions of size $0\leq |\lambda|< n$, such that
\begin{equation}\label{E:un:canonical}
U_n=\sum_{k=1}^n\sum_{\lambda\vdash n-k} c^n_\lambda y_0^{n-\ell(\lambda)}y_{\lambda}t^{k}. 
\end{equation}
Additionally, the coefficients $c^n_\lambda$ satisfy the recurrence relation
\begin{equation}\label{E:prop:recrel}
c^1_\emptyset=1, \quad c^{n+1}_\lambda=c^{n}_\lambda + \sum_{i=1}^n (\beta_{i-1}+1) c^{n}_{\lambda_i},
\end{equation}
where:
\begin{itemize}
\item $\beta_0=n-\ell(\lambda)$ and, for $j\geq 1$, $\beta_j$ is the multiplicity of $j$ in $\lambda$;
\item $\lambda_i$ is obtained from $\lambda$ by subtracting $1$ from a part of $\lambda$ of size $i$, provided that $\beta_i>0$;
\item $c^{n}_{\lambda_i}=0$ if  $\beta_i=0$;
\item $c^{n}_{\lambda}=0$ if $\lambda\vdash m$ with $m\geq n$.
\end{itemize}
\end{prop}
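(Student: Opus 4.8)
The plan is to work directly from the recursive definition $U_{n+1} = y_0 U_n t + y_0 \Delta(U_n)$ in $R\langle t\rangle$ and track what it does to a single monomial of the shape appearing in \eqref{E:un:canonical}. First I would establish the form \eqref{E:un:canonical} by induction on $n$: assuming $U_n = \sum_{k}\sum_{\lambda\vdash n-k} c^n_\lambda\, y_0^{n-\ell(\lambda)} y_\lambda t^k$ with positive integer coefficients, I apply $y_0\rho_t$ and $y_0\Delta$ separately. The term $y_0 U_n t$ sends $y_0^{n-\ell(\lambda)} y_\lambda t^k$ to $y_0^{n+1-\ell(\lambda)} y_\lambda t^{k+1}$, which is exactly a term of the desired shape for $U_{n+1}$ indexed by the same partition $\lambda$ (note $n+1-\ell(\lambda)$ is the correct exponent since $\lambda\vdash n-k = (n+1)-(k+1)$), contributing the summand $c^n_\lambda$ to $c^{n+1}_\lambda$. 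That accounts for the $c^n_\lambda$ term in \eqref{E:prop:recrel}.

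Next I would analyze $y_0\Delta$ applied to $y_0^{n-\ell(\lambda)} y_\lambda$. Since $\Delta$ is a derivation with $\Delta(y_i) = y_{i+1}$ and $\Delta(t)=0$, it acts on the product $y_0^{n-\ell(\lambda)} y_\lambda$ by the Leibniz rule, differentiating each factor in turn. Differentiating one of the $n-\ell(\lambda)$ copies of $y_0$ turns a $y_0$ into $y_1$, producing the monomial $y_0^{n-1-\ell(\lambda)} y_1 y_\lambda$; premultiplying by $y_0$ gives $y_0^{n-\ell(\lambda)} y_1 y_\lambda = y_0^{(n+1)-\ell(\lambda\cup\{1\})} y_{\lambda\cup\{1\}}$, and since $\ell(\lambda\cup\{1\}) = \ell(\lambda)+1$ this is the term of $U_{n+1}$ indexed by the partition $\lambda\cup\{1\}$; there are $n-\ell(\lambda) = \beta_0$ such copies, matching the $i=1$ contribution $(\beta_0+1)c^n_{\lambda}$ once we also count the next case. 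Differentiating one of the parts of $\lambda$ equal to $i$ (for $i\geq 1$) turns a $y_i$ into $y_{i+1}$, i.e.\ replaces that part $i$ by $i+1$; premultiplying by $y_0$ and checking exponents, this yields the term of $U_{n+1}$ indexed by the partition $\mu$ obtained from $\lambda$ by raising a part $i$ to $i+1$, equivalently $\lambda$ is obtained from $\mu$ by lowering a part; there are $\beta_i$ copies of the part $i$, but if the part $i+1$ already occurs with multiplicity $\beta_{i+1}$ in $\lambda$, then each resulting monomial $y_0^{\cdots} y_\mu$ is hit from $\beta_i$ choices but appears with the multinomial bookkeeping baked into $y_\mu$ itself, so the net coefficient contributed to $c^{n+1}_\mu$ from this source is $\beta_i$. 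Reindexing so that the target partition is called $\lambda$ and $\lambda_i$ denotes the partition obtained from $\lambda$ by subtracting $1$ from a part of size $i$, the total contribution from $y_0\Delta$ to $c^{n+1}_\lambda$ is $\sum_{i\ge 1}(\beta_{i-1}\text{ or }\beta_{i-1}+1)\,c^n_{\lambda_i}$; combining with the $y_0\rho_t$ term and carefully reconciling the "$+1$" (which comes precisely from the $y_0$ that is premultiplied and can itself be differentiated at the next stage, i.e.\ from the $\beta_0$-count being shifted) gives exactly \eqref{E:prop:recrel}.

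The positivity of the $c^n_\lambda$ then follows immediately by induction, since every term on the right-hand side of \eqref{E:prop:recrel} is nonnegative and the leading term $c^n_\lambda$ (for $\lambda$ already occurring in $U_n$) is positive by hypothesis, while new partitions first appearing in $U_{n+1}$ receive a strictly positive contribution from the $(\beta_{i-1}+1)c^n_{\lambda_i}$ terms; one checks the base case $c^1_\emptyset = 1$ directly from $U_1 = y_0 t$. The conventions $c^n_{\lambda_i}=0$ when $\beta_i=0$ and $c^n_\lambda = 0$ when $|\lambda|\ge n$ are consistent with the fact that $U_n$ only involves partitions of size $<n$, which is visible from the two gradings: a monomial $y_0^{n-\ell(\lambda)}y_\lambda t^k$ in $U_n$ has total $y$-degree $n$ and $(y\text{-weight}+t\text{-degree}) = |\lambda| + k = n$, forcing $k\geq 1$ hence $|\lambda|\leq n-1$.

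The main obstacle I anticipate is purely bookkeeping: getting the "$\beta_{i-1}+1$" exactly right rather than off by one. The subtlety is that the multiplicity $\beta_0 = n-\ell(\lambda)$ counts the exponent of $y_0$ \emph{in the monomial of $U_n$}, but when we pass to $U_{n+1}$ and reindex by the target partition, the relevant multiplicities shift; I would handle this by fixing the target partition $\lambda\vdash (n+1)-k$ first and asking which monomials of $U_n$ map into $y_0^{n+1-\ell(\lambda)}y_\lambda t^k$ under $y_0\Delta$ (lowering a part of $\lambda$ of size $i$ gives a source $\lambda_i$ with a part of size $i-1$, and the number of ways $\Delta$ produces the part $i$ is the multiplicity of $i-1$ in $\lambda_i$, which is $\beta_{i-1}+1$ when $i=1$ because of the extra premultiplied $y_0$, and $\beta_{i-1}$ otherwise — except the statement absorbs all of this into a single clean formula by a judicious indexing choice), rather than pushing monomials of $U_n$ forward, which scatters them. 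Doing the count in the "pullback" direction keeps the multiplicities attached to the fixed partition $\lambda$ and makes the formula \eqref{E:prop:recrel} fall out cleanly.
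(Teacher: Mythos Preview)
Your approach is correct and is exactly the route the paper takes: its proof simply says that the form \eqref{E:un:canonical}, the recurrence \eqref{E:prop:recrel}, and the positivity of the $c^n_\lambda$ all follow by induction on $n$ from the defining recursion \eqref{E:univpolys:recdef}, together with the two homogeneity gradings already recorded. Your instinct to do the count in the pullback direction (fixing the target $\lambda$) is the right one and resolves the ``$+1$'' cleanly: for $i\ge 2$ the source $\lambda_i$ has exactly one more part equal to $i-1$ than $\lambda$ does, so $\Delta$ hits the relevant factor with multiplicity $\beta_{i-1}+1$, while for $i=1$ the source $\lambda_1$ has $y_0$-exponent $n-\ell(\lambda_1)=n-\ell(\lambda)+1=\beta_0+1$.
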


\begin{proof}
As we have observed before, $U_n$ is a sum of integer multiples of monomials of the form $y_0^{a_0}\cdots y_{n-1}^{a_{n-1}}t^k$, with $1\leq k\leq n$. Given such a monomial occurring in $U_n$ with nonzero coefficient, we need to verify that:
\begin{enumerate}[label=\textup{(\alph*)}]
\item $0\leq 1a_1+ 2a_2+\cdots + (n-1)a_{n-1}\leq n-1$, so that $y_1^{a_1}\cdots y_{n-1}^{a_{n-1}}=y_\lambda$, where $\lambda$ is the partition with $a_i$ parts of size $i$;
\item $a_0=n-\ell(\lambda)=n-(a_1+\cdots +a_{n-1})$;
\item $k=n-|\lambda|=n-(1a_1+\cdots +(n-1)a_{n-1})$.
\end{enumerate}
These can be easily established by induction on $n$ using \eqref{E:univpolys:recdef} (actually, they are equivalent to the two homogeneous properties of $U_n$ already mentioned) and so can the recurrence relation \eqref{E:prop:recrel}. Finally, the positivity of the coefficients $c^n_\lambda$ can also be established by induction on $n$, using \eqref{E:prop:recrel}.
\end{proof}

\begin{exam}
For $\nu=(2, 1)$, we have $$c^5_{\nu}=c^4_{\nu}+3c^4_{\nu_1}+2c^4_{\nu_2}=c^4_{2, 1}+3c^4_{2}+2c^4_{1, 1}=4+3\times 4+2\times 7=30,$$
and for $\nu=(2, 1, 1)$, we have $$c^5_{\nu}=c^4_{\nu}+2c^4_{\nu_1}+3c^4_{\nu_2}=2c^4_{2, 1}+3c^4_{1, 1, 1}=2\times 4+3\times 1=11.$$ 
\end{exam}

\begin{remark}
Proposition \ref{P:recrel} tells us the size of the polynomial $U_n$: its number of monomials is the number of integer partitions of size at most $n-1$ (up to an offset, this is sequence A000070 in \cite{oeis}).
\end{remark}


\section{Combinatorial interpretations of the polynomials $U_n$}\label{S:combint}

In this section we introduce several descriptions of the polynomials $U_n$, each leading naturally to the next one. The first description is an ``umbral formula" for $U_n$, i.e., a formula from which $U_n$ is obtained by applying a linear map $x^{\alpha}\mapsto y_{\alpha}$ that ``lowers" exponents to turn them into indices (Section \ref{SS:combint:umbral}). The summands happen to be naturally indexed by simple combinatorial objects (strictly subdiagonal maps). This gives an explicit presentation of the formula for $U_n$ (Section \ref{SS:combint:esdm}). In turn, these combinatorial objects encode increasing trees, giving another presentation and interpretation of the formula  (Section \ref{SS:combint:it}). Increasing trees can be gathered according to their shapes; this provides a more compact way of describing $U_n$ (Section \ref{SS:combint:ut}).

Most of the results and ideas  in this section are not new: they already appear in the paper by Mohammad--Noori \cite{MohammadNoori} or the (seemingly unrelated) work of Hivert, Novelli and Thibon \cite{HNT} on the solutions of certain differential equations. We have tried to present them as clearly and explicitly as possible. This serves for preparing the generalizations  of Section \ref{S:gen}.

\subsection{Umbral formula}\label{SS:combint:umbral}

We continue to assume that $A$ is an arbitrary ring and $\partial$ is a derivation of $A$. Consider the tensor product (over $\ZZ$):
\[
A^{\otimes n} = A \otimes A \otimes \cdots \otimes A \quad \textrm{($n$ copies of $A$),}
\]
where we index the factors in the tensor product from right to left and from $0$ to $n-1$:
\[
n-1, n-2, \ldots, 1, 0.
\]
We denote by $m_n: A^{\otimes n} \longrightarrow A$ the $n$-ary multiplication map and by $\partial_{n,i}$ the map $A^{\otimes n} \longrightarrow A^{\otimes n}$ that applies $\partial$ on factor $i$ and the identity on the others. If $i\leq n-1$, then $\partial_{n+1, i}=1_A\otimes \partial_{n,i}$ so, to lighten the notation, we henceforth use $\partial_i$ to denote any of the maps $\partial_{n,i}$, as long as $n\geq i+1$.

For $n=2$, the Leibniz identity can be stated as
\[
\partial \circ m_2 = m_2 \circ (\partial_1+\partial_0).
\]
By an easy induction, we get that, for any $n \geq 1$,
\begin{equation}\label{E:umbral:dm}
\partial \circ m_n = m_n \circ \sum_{i=0}^{n-1} \partial_i. 
\end{equation}
Using this identity, it is derived, again by induction, that for any $n \geq 0$ and any $h_0$, $h_1$, \ldots, $h_n \in A$,
\[
h_n \partial h_{n-1} \partial \cdots h_1 \partial (h_0) 
= m_{n+1} \circ
\left(
  \prod_{i=0}^{n-1}   \left(\partial_{i}+\cdots+\partial_{1} + \partial_0\right)\right)
  \left(h_{n}  \otimes \cdots \otimes h_1 \otimes h_0\right).
\]
In particular, for any $a,h \in A$ and any $n \ge 0$,
\[
(h \partial)^n(a) = 
m_{n+1} \circ
\left(
\prod_{i=0}^{n-1} \left(  \partial_{i}+\cdots+\partial_{1} + \partial_0\right) \right)
\left(h  \otimes \cdots \otimes h \otimes h \otimes a \right).
\]
This yields the following recipe for calculating $U_n$.
\begin{thm}\label{T:umbral}
For any $n \ge 0$, the polynomial $U_n$ is obtained by applying 
to the product
\begin{equation}\label{E:product}
\prod_{i=0}^{n-1} \left( x_{i}+ \cdots + x_2 + x_1 + x_0\right)
\end{equation}
in the commutative polynomial ring $\ZZ[x_0, x_1,\ldots,x_n]$ the $\ZZ$-linear map from $\ZZ[x_0, x_1,\ldots,x_n]$ to $R\langle t \rangle$ defined on a basis of $\ZZ[x_0, x_1,\ldots,x_n]$ by
\[
x_n^{\alpha_n} x_{n-1}^{\alpha_{n-1}} \cdots x_1^{\alpha_1}x_0^k \longmapsto y_{\alpha_n} y_{\alpha_{n-1}} \cdots y_{\alpha_1}t^k.
\]
\end{thm}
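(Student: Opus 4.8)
The plan is to track how the two sides transform under multiplication by $h\partial$, i.e. under the recursion defining $U_n$, and show that the umbral recipe intertwines with that recursion. Concretely, write $\Phi_n\colon \ZZ[x_0,\dots,x_n]\to R\langle t\rangle$ for the $\ZZ$-linear map of the statement, and let $Q_n=\prod_{i=0}^{n-1}(x_i+\cdots+x_0)\in\ZZ[x_0,\dots,x_n]$ be the product in \eqref{E:product}, with $Q_0=1$. I want to prove $\Phi_n(Q_n)=U_n$ for all $n\ge 0$. Since the identity preceding the theorem already established $(h\partial)^n(a)=m_{n+1}\circ\big(\prod_{i=0}^{n-1}(\partial_i+\cdots+\partial_0)\big)(h\otimes\cdots\otimes h\otimes a)$, one route is to compare this directly with \eqref{E:univpolys:univprop}; but the cleaner route, avoiding a re-derivation of universality, is a self-contained induction on $n$ matching the recursion \eqref{E:univpolys:recdef}.

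First I would record the base case $n=0$: $Q_0=1$ and $\Phi_0(1)=1=U_0$. For the inductive step, I need to understand $Q_{n+1}$ in terms of $Q_n$. Note $Q_{n+1}=Q_n\cdot(x_n+x_{n-1}+\cdots+x_0)$, where crucially $Q_n$ involves only $x_0,\dots,x_{n-1}$, so $x_n$ is a fresh variable multiplying $Q_n$. I would now split the new factor as $(x_0)+(x_1+\cdots+x_n)$ and analyze $\Phi_{n+1}$ on each piece applied to a monomial $x_{n-1}^{\alpha_{n-1}}\cdots x_1^{\alpha_1}x_0^{k}$ of $Q_n$ (note this monomial has no $x_n$). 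Multiplying by $x_0$ raises the $x_0$-exponent from $k$ to $k+1$, which under $\Phi_{n+1}$ turns $\cdots t^k$ into $\cdots t^{k+1}$, i.e. right multiplication by $t$. Multiplying by $x_j$ for $1\le j\le n$: for $1\le j\le n-1$ this raises $\alpha_j$ to $\alpha_j+1$, changing the factor $y_{\alpha_j}$ to $y_{\alpha_j+1}$; and multiplying by $x_n$ introduces a factor $x_n^{1}$, which under $\Phi_{n+1}$ produces a new leading factor $y_1$. I would then observe that the total effect of $\sum_{j=1}^n x_j\cdot(-)$ on $\Phi_{n+1}$, read off a monomial, is exactly the derivation $\Delta$ composed with left multiplication by $y_0$ — or more precisely, that $\Phi_{n+1}$ of the sum of $x_j$-translates equals $\Delta$ applied to $\Phi_n$ of the monomial, since $\Delta(y_{\alpha_{n-1}}\cdots y_{\alpha_1}t^k)=\sum$ of the terms obtained by bumping one index up by one, and the ``new $y_1$'' from the $x_n$ term corresponds to the summand of $\Delta$ acting, conceptually, on the (absent) $y_0$-slot that the next $y_0$-prefactor will occupy. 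Assembling: $\Phi_{n+1}(Q_{n+1})=\Phi_{n+1}\big(Q_n\cdot x_0\big)+\Phi_{n+1}\big(Q_n\cdot\sum_{j=1}^n x_j\big)$. I would argue the first summand equals $\Phi_n(Q_n)\,t=U_n t$ and the second equals $\Delta$-type expansion; after multiplying everything on the left by $y_0$ (which comes from the shift $\Phi_n\mapsto\Phi_{n+1}$, since the index set of variables shifts), this reproduces $y_0 U_n t+y_0\Delta(U_n)=U_{n+1}$ by \eqref{E:univpolys:recdef}.

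The main obstacle is the bookkeeping of the index shift: $Q_n$ lives in $\ZZ[x_0,\dots,x_{n-1}]$ (effectively), $Q_{n+1}$ in $\ZZ[x_0,\dots,x_n]$, and $\Phi_{n+1}$ on a monomial $x_{n}^{\alpha_n}\cdots x_1^{\alpha_1}x_0^k$ produces $y_{\alpha_n}\cdots y_{\alpha_1}t^k$, so the number of $y$-factors (other than powers of $y_0$ absorbed by $k$ being separate) matches the number of positive $x_j$-exponents with $j\ge 1$; I need to check carefully that the ``extra'' $y_0$ that appears in $U_{n+1}=y_0(\cdots)$ is precisely accounted for by the shift in which variable plays the role of ``$x_0$'' versus being a genuine $y$-index, and that multiplying $Q_n$ by $x_1+\cdots+x_n$ and then applying $\Phi_{n+1}$ genuinely equals $y_0\Delta(\Phi_n(Q_n))/y_0$ — i.e. that $\Delta$'s Leibniz expansion over a monomial $y_{\alpha_1}\cdots y_{\alpha_{n-1}}$ (including an implicit $y_0$) matches the combinatorial sum over which $x_j$ to increment. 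An alternative that sidesteps this is: prove \eqref{E:product}'s image satisfies \eqref{E:univpolys:univprop} by plugging $x_i\rightsquigarrow$ the operator interpretation and invoking the uniqueness half of Theorem \ref{T:univpolys:univprop} directly — this is shorter and I would likely present it as the primary proof, using the pre-theorem computation of $(h\partial)^n(a)$ as the key input and only remarking that the monomial-level map $\Phi_n$ is consistent with $\ev$.
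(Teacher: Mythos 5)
Your proposal is correct, and it in fact contains two proofs. The one you relegate to the final sentence --- evaluate the umbral image of \eqref{E:product} via the tensor-product identity for $(h\partial)^n(a)$ and invoke the uniqueness half of Theorem \ref{T:univpolys:univprop} --- is precisely the paper's (implicit) proof: the displayed formula for $(h\partial)^n(a)$ immediately before the theorem shows that the image of \eqref{E:product} under the umbral map satisfies \eqref{E:univpolys:univprop} for every $A$, $\partial$ and central $h$, and uniqueness does the rest. The induction you develop in detail is a genuinely different, self-contained route, and it does go through: writing $Q_{n+1}=Q_n\cdot(x_n+\cdots+x_0)$ with $Q_n\in\ZZ[x_0,\ldots,x_{n-1}]$, a monomial $m=x_{n-1}^{\alpha_{n-1}}\cdots x_1^{\alpha_1}x_0^k$ of $Q_n$ satisfies $\Phi_{n+1}(mx_0)=y_0\,\Phi_n(m)\,t$ and $\sum_{j=1}^{n}\Phi_{n+1}(mx_j)=y_0\,\Delta(\Phi_n(m))$. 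The bookkeeping worry you raise resolves cleanly once you note that $\Phi_n(m)$ already carries a leading factor $y_{\alpha_n}=y_0$ (since $\alpha_n=0$): the summand of $\Delta$ that bumps that factor to $y_1$ matches exactly the $x_n$-translate of $m$, the summands bumping $y_{\alpha_j}$ to $y_{\alpha_j+1}$ match the $x_j$-translates for $1\le j\le n-1$, and the single new $y_0$ prefactor is supplied by $\alpha_{n+1}=0$ in $\Phi_{n+1}$. Summing over the monomials of $Q_n$ gives $\Phi_{n+1}(Q_{n+1})=y_0\Phi_n(Q_n)t+y_0\Delta(\Phi_n(Q_n))=U_{n+1}$ by \eqref{E:univpolys:recdef}. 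As for what each approach buys: the paper's route is shorter, reuses machinery already in place, and explains why the theorem carries no separate proof environment; your induction avoids both the tensor-product computation and the uniqueness statement, establishing the identity purely inside $R\langle t\rangle$ directly from the defining recursion.
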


\begin{remark}
The product in~\eqref{E:product} contains no occurrence of $x_n$. This is deliberate so that $y_0$ is always a factor of $U_n$, for $n>0$. Notice in particular that the term $x_0^n$ from~\eqref{E:product} is mapped to $y_0^n t^n\in R\langle t \rangle$, which is always a term of $U_n$.
\end{remark}


\subsection{Enumeration of  subdiagonal maps}\label{SS:combint:esdm}

The expansion of the product~\eqref{E:product} in $\ZZ[x_0, x_1,\ldots,x_n]$
is obtained by choosing for the $i$-th factor $\left( x_{i-1}+ \cdots + x_2 + x_1 + x_0\right)$ a summand $x_{f(i)}$, and summing over all possibilities. This expansion is thus
\[
\sum_{f \in \SD{n}} x_{f(n)} x_{f(n-1)} \cdots x_{f(2)}  x_{f(1)}
\]
where $\SD{n}$ is the set of all maps $f:  \{1,\ldots,n\} \longrightarrow \{0,1,\ldots,n\}$ that are  \textit{subdiagonal}, i.e., that satisfy $f(i) < i$ for all $1\leq i\leq n$. In particular, $f([n])\subseteq \{0,1,\ldots,n-1\}$ but it will be convenient to take the codomain of elements in $\SD{n}$ to be $\{0,1,\ldots,n\}$.

Gathering the variables, we get that this expansion is:
\[
\sum_{f \in \SD{n}} 
\prod_{i=0}^{n-1} x_i^{\#f^{-1}(\{i\})}.
\]
In conjunction with Theorem~\ref{T:umbral}, the above gives another description of $U_n$.
\begin{thm}\label{T:SD}
For any $n \ge 0$, 
\[
U_n  = \sum_{f \in \SD{n}}   \left(\prod_{i=1}^{n} y_{\#f^{-1}(\{i\})}\right) t^{\#f^{-1}(\{0\})}. 
\]
\end{thm}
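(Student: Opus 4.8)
The plan is to chain together the two facts already established in Section~\ref{SS:combint:esdm}: first, Theorem~\ref{T:umbral} identifies $U_n$ as the image of the product~\eqref{E:product} under the exponent-lowering $\ZZ$-linear map $\phi\colon x_n^{\alpha_n}\cdots x_1^{\alpha_1}x_0^k\mapsto y_{\alpha_n}\cdots y_{\alpha_1}t^k$; second, the distributive expansion of~\eqref{E:product} was shown to equal $\sum_{f\in\SD{n}}\prod_{i=0}^{n-1}x_i^{\#f^{-1}(\{i\})}$. So the proof is essentially: apply $\phi$ termwise to the latter sum and read off what each monomial becomes.

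The one point that requires care — and is the only real content of the argument — is checking that each summand $\prod_{i=0}^{n-1}x_i^{\#f^{-1}(\{i\})}$ is genuinely a basis monomial of $\ZZ[x_0,x_1,\ldots,x_n]$ of the shape to which $\phi$ has been prescribed, so that $\phi$ may legitimately be applied to the sum term by term. Here I would observe: since every $f\in\SD{n}$ has $f(i)<i\le n$, the variable $x_n$ never appears, so each summand is of the form $x_{n-1}^{\alpha_{n-1}}\cdots x_1^{\alpha_1}x_0^{\alpha_0}$ with $\alpha_i=\#f^{-1}(\{i\})$; matching this against the template $x_n^{\alpha_n}\cdots x_1^{\alpha_1}x_0^k$ (with $\alpha_n=0$) gives $\phi\!\left(\prod_{i=0}^{n-1}x_i^{\#f^{-1}(\{i\})}\right)=\left(\prod_{i=1}^{n-1}y_{\#f^{-1}(\{i\})}\right)t^{\#f^{-1}(\{0\})}$. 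Finally, since $f([n])\subseteq\{0,\ldots,n-1\}$, we have $\#f^{-1}(\{n\})=0$, so the product $\prod_{i=1}^{n-1}$ may be harmlessly extended to $\prod_{i=1}^{n}$ by appending the factor $y_0$; this is a cosmetic normalization that makes the statement symmetric in the index range and does not change the value.

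Putting the pieces together: by Theorem~\ref{T:umbral} and linearity of $\phi$,
\[
U_n=\phi\!\left(\prod_{i=0}^{n-1}(x_{i}+\cdots+x_1+x_0)\right)=\phi\!\left(\sum_{f\in\SD{n}}\prod_{i=0}^{n-1}x_i^{\#f^{-1}(\{i\})}\right)=\sum_{f\in\SD{n}}\left(\prod_{i=1}^{n}y_{\#f^{-1}(\{i\})}\right)t^{\#f^{-1}(\{0\})},
\]
which is the claimed formula. The degenerate cases $n=0$ (empty product, $\SD{0}$ a single empty map, both sides equal to $U_0=1$) and $n=1$ ($\SD{1}=\{f\colon f(1)=0\}$, both sides equal to $y_0t$) are immediate sanity checks. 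There is no serious obstacle here; the theorem is a direct corollary of the preceding two results, and the only thing to be vigilant about is the bookkeeping of which variable indices do or do not occur, so that the linear map is applied to honest basis elements.
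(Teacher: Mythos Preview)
Your proposal is correct and follows exactly the paper's approach: the paper derives Theorem~\ref{T:SD} by expanding the product~\eqref{E:product} as a sum over $\SD{n}$, gathering the variables, and invoking Theorem~\ref{T:umbral}, precisely as you do. Your write-up simply makes explicit the index bookkeeping (that $\alpha_n=0$ contributes the extra $y_0$ factor allowing the product to run to $i=n$) that the paper leaves implicit.
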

It will be interesting, when studying the coefficients $c^n_{\lambda}$, to consider rather than the functions $f \in \SD{n}$, the corresponding partial maps from $[n]$ to $[n]$. A \emph{partial map} $g$ from a set $A$ to a set $B$ is a map from some subset, $\Dom(g)$, of $A$ to $B$. Let $\PD{n}$ be the set of all subdiagonal partial maps from $[n]$ to $[n]$, i.e., all maps $g$ from some subset $\Dom(g)$ of $[n]$ to $[n]$, satisfying $g(i) < i$, for all $i \in \Dom(g)$.

\begin{thm}\label{T:PD}
For any $n \ge 0$,  
\begin{align*}
U_n &= \sum_{g \in \PD{n}} 
\left(\prod_{i=1}^{n} y_{\#g^{-1}(\{i\})}\right) t^{n-\#\Dom(g)}. 
\end{align*}
\end{thm}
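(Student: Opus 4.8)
The plan is to deduce Theorem~\ref{T:PD} from Theorem~\ref{T:SD} by exhibiting an explicit bijection between $\SD{n}$ and $\PD{n}$ that preserves the relevant statistics. Given $f \in \SD{n}$, I would associate to it the partial map $g$ obtained by restricting $f$ to the set $\{i \in [n] : f(i) \neq 0\}$, i.e., simply ``forget'' the preimages of $0$ and record the rest as a partial function. Since $f$ is subdiagonal, $f(i) < i$ for all $i$, so $g(i) < i$ for all $i \in \Dom(g)$; moreover $f(i) \neq 0$ forces $f(i) \in [n]$, so $g$ indeed takes values in $[n]$ and is subdiagonal, hence $g \in \PD{n}$. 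Conversely, given $g \in \PD{n}$, extend it to $f: [n] \to \{0,1,\ldots,n\}$ by setting $f(i) = g(i)$ for $i \in \Dom(g)$ and $f(i) = 0$ otherwise; this $f$ is subdiagonal (for $i \notin \Dom(g)$, $f(i) = 0 < i$ since $i \geq 1$). These two assignments are mutually inverse, so we have a bijection $\SD{n} \leftrightarrow \PD{n}$.

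Next I would check that the bijection transports the statistics correctly. For $i \in [n]$, the fiber $f^{-1}(\{i\})$ equals $g^{-1}(\{i\})$ exactly, because extending $g$ to $f$ only adds elements to the fiber over $0$, never over a positive value; hence $\prod_{i=1}^{n} y_{\#f^{-1}(\{i\})} = \prod_{i=1}^{n} y_{\#g^{-1}(\{i\})}$. For the power of $t$: we have $\#f^{-1}(\{0\}) = n - \#\{i \in [n] : f(i) \neq 0\} = n - \#\Dom(g)$, since the indices where $f$ is nonzero are precisely the domain of $g$. Substituting these two equalities into the formula of Theorem~\ref{T:SD} and re-indexing the sum over $\SD{n}$ as a sum over $\PD{n}$ via the bijection yields
\[
U_n = \sum_{f \in \SD{n}} \left(\prod_{i=1}^{n} y_{\#f^{-1}(\{i\})}\right) t^{\#f^{-1}(\{0\})} = \sum_{g \in \PD{n}} \left(\prod_{i=1}^{n} y_{\#g^{-1}(\{i\})}\right) t^{n - \#\Dom(g)},
\]
which is the claimed identity.

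There is no real obstacle here; the statement is essentially a reformulation, and the only thing to be careful about is the bookkeeping on the codomain (that values $0$ of $f$ are exactly what gets discarded to pass to the partial map, and that subdiagonality is preserved in both directions because every $i \in [n]$ satisfies $i \geq 1 > 0$). One could alternatively prove the theorem directly by the same umbral/expansion argument as for Theorem~\ref{T:SD}, but the bijective route is shorter and makes transparent why the exponent of $t$ becomes $n - \#\Dom(g)$. I would present the bijection as the core of the argument and relegate the statistic-matching to one or two sentences.
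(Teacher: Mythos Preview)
Your proposal is correct and matches the paper's (implicit) approach: the paper introduces $\PD{n}$ precisely as ``the corresponding partial maps'' obtained from elements of $\SD{n}$, and states Theorem~\ref{T:PD} without further argument, relying on exactly the bijection you spell out. Your write-up simply makes explicit what the paper leaves to the reader.
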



\subsection{Expansion in increasing trees}\label{SS:combint:it}

Following Cayley \cite{Cayley},  Hivert, Novelli and Thibon \cite{HNT}, Mohammad--Noori \cite{MohammadNoori}, and Blasiak and Flajolet \cite{BlasiakFlajolet}, we interpret the calculations of iterated derivatives in term of increasing trees. This is done by interpreting the map $f\in \SD{n}$ as the map that associates to each node $j\in [n]$ its father in a tree rooted at $0$. This gives a bijection from $\SD{n}$ to the set $\T{n}$ of all increasing rooted trees with vertex set $\{0,1,\ldots,n\}$. Then, for each $i$, $\#f^{-1}(\{i\})$ is the number of children of node $i$. From this and Theorem \ref{T:SD} follows the following formula.
\begin{thm}[\cite{MohammadNoori,BergeronReutenauer}]
\label{T:trees}
For any $n \ge 0$,
\begin{equation}\label{E:trees}
U_n = \sum_{T \in \T{n}} \left(\prod_{i=1}^n y_{\ch(i;T)}\right) t^{\ch(0;T)},
\end{equation}
where $\ch(i;T)$ stands for the number of children (outdegree) of node $i$ in the rooted tree $T$.
\end{thm}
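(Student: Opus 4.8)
The plan is to derive Theorem~\ref{T:trees} directly from Theorem~\ref{T:SD} by exhibiting the classical bijection between subdiagonal maps and increasing rooted trees, and then tracking how the two statistics appearing in the formula correspond under this bijection. First I would make precise the bijection $\Phi\colon \SD{n}\longrightarrow \T{n}$: given $f\in\SD{n}$, let $\Phi(f)$ be the rooted tree with vertex set $\{0,1,\ldots,n\}$, rooted at $0$, in which the parent of vertex $j$ is $f(j)$, for each $j\in[n]$. I would check that $\Phi(f)$ is indeed a tree (each vertex in $[n]$ has exactly one outgoing ``parent'' edge, and iterating $f$ strictly decreases the label by the subdiagonality condition $f(j)<j$, so following parents from any vertex reaches $0$ without cycles), and that it is \emph{increasing} (on the unique path from the root to any vertex the labels strictly increase, again because $f(j)<j$). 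Conversely, any increasing rooted tree on $\{0,1,\ldots,n\}$ determines its parent function, which is subdiagonal precisely because the tree is increasing; these two constructions are mutually inverse, so $\Phi$ is a bijection.

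Next I would observe that for $f\in\SD{n}$ and $T=\Phi(f)$, the fiber $f^{-1}(\{i\})$ is exactly the set of children of node $i$ in $T$, so $\#f^{-1}(\{i\})=\ch(i;T)$ for every $i\in\{0,1,\ldots,n\}$. In particular this holds for $i=0$, where $\#f^{-1}(\{0\})=\ch(0;T)$, and for $i\in[n]$, where $\#f^{-1}(\{i\})=\ch(i;T)$. Substituting these equalities into the formula of Theorem~\ref{T:SD} and reindexing the sum over $\SD{n}$ as a sum over $\T{n}$ via $\Phi$ yields
\[
U_n=\sum_{f\in\SD{n}}\left(\prod_{i=1}^n y_{\#f^{-1}(\{i\})}\right)t^{\#f^{-1}(\{0\})}=\sum_{T\in\T{n}}\left(\prod_{i=1}^n y_{\ch(i;T)}\right)t^{\ch(0;T)},
\]
which is exactly \eqref{E:trees}. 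I would also note the degenerate case $n=0$ separately: $\SD{0}$ and $\T{0}$ each consist of a single (empty-domain, resp.\ single-vertex) object, and both sides reduce to $1=U_0$, consistent with the conventions already fixed in the paper.

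Honestly, there is no real obstacle here: the content is entirely in setting up the parent-function bijection cleanly and noting that ``number of preimages under $f$'' literally unwinds to ``number of children in $\Phi(f)$.'' The only point requiring a modicum of care is the verification that $\Phi$ lands in increasing trees and is a bijection onto all of them — i.e., that subdiagonality of the parent function is equivalent to the increasing property — and that the codomain convention for $\SD{n}$ (maps into $\{0,\ldots,n\}$ whose image actually lies in $\{0,\ldots,n-1\}$) matches the vertex set $\{0,\ldots,n\}$ of the trees, with vertex $n$ always a leaf. Since all of this is standard (and the paper explicitly invokes Cayley, Hivert--Novelli--Thibon, Mohammad--Noori, and Blasiak--Flajolet for it), I would keep the proof to a short paragraph establishing the bijection and a one-line identification of the statistics.
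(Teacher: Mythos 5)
Your proof is correct and follows exactly the paper's own route: the paper likewise obtains \eqref{E:trees} from Theorem~\ref{T:SD} by interpreting $f\in\SD{n}$ as the parent function of an increasing tree rooted at $0$ and identifying $\#f^{-1}(\{i\})$ with $\ch(i;T)$. Your write-up simply makes explicit the routine verifications (bijectivity, the increasing property, the $n=0$ case) that the paper leaves implicit.
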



\subsection{Expansion in unlabeled trees}\label{SS:combint:ut}

In \eqref{E:trees}, the monomial  $\left(\prod_{i=1}^n y_{\ch(i;T)}\right)t^{\ch(0;T)}$ depends only on the shape of the tree $T$, i.e., the unlabeled rooted tree obtained from $T$ by forgetting the labels. This can be used to turn Formula \eqref{E:trees} into a formula with considerably fewer summands. Let $\UT{n}$ be the set of unlabeled rooted trees with $n+1$ vertices and, for any $T\in \UT{n}$, let $\alpha(T)$ be the number of increasing trees with vertex set $\{0,1,\ldots,n\}$ and shape $T$.

\begin{thm}
For any $n \ge 0$,
\begin{equation}\label{E:unlabeled:trees}
U_n = \sum_{T \in \UT{n}} \alpha(T)  \left( \prod_{v \neq 0_T} y_{\ch(v;T)}\right)t^{\ch(0_T;T)},
\end{equation}
where  $0_T$ stands for the root of $T$ and the product is carried over all vertices $v$ of $T$ different from the root.
\end{thm}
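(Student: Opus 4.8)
The plan is to start from Theorem~\ref{T:trees}, the expansion of $U_n$ over labeled increasing trees $T\in\T{n}$, and simply collect the summands according to the shape of $T$. The key observation, already noted in the statement, is that the monomial $\bigl(\prod_{i=1}^n y_{\ch(i;T)}\bigr)t^{\ch(0;T)}$ is a \emph{shape invariant}: if two increasing trees $T,T'\in\T{n}$ have the same underlying unlabeled rooted tree $S\in\UT{n}$, then there is a bijection of vertices carrying $T$ to $T'$ which carries the root to the root, hence preserves the multiset of outdegrees of non-root vertices as well as the outdegree of the root. Therefore the monomial contributed by $T$ depends only on $S$, and equals $\bigl(\prod_{v\neq 0_S} y_{\ch(v;S)}\bigr)t^{\ch(0_S;S)}$, where the product now runs over the (unlabeled) non-root vertices of $S$.

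Next I would partition the index set: $\T{n}=\coprod_{S\in\UT{n}}\T{n}^{(S)}$, where $\T{n}^{(S)}$ is the set of increasing trees on vertex set $\{0,1,\dots,n\}$ whose shape is $S$. By definition $\alpha(S)=\#\T{n}^{(S)}$. Substituting the shape-invariance into \eqref{E:trees} gives
\[
U_n=\sum_{T\in\T{n}}\left(\prod_{i=1}^n y_{\ch(i;T)}\right)t^{\ch(0;T)}
=\sum_{S\in\UT{n}}\sum_{T\in\T{n}^{(S)}}\left(\prod_{v\neq 0_S} y_{\ch(v;S)}\right)t^{\ch(0_S;S)}
=\sum_{S\in\UT{n}}\alpha(S)\left(\prod_{v\neq 0_S} y_{\ch(v;S)}\right)t^{\ch(0_S;S)},
\]
which is exactly \eqref{E:unlabeled:trees}. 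The only point needing a word of care is that the product $\prod_{v\neq 0_S}y_{\ch(v;S)}$ is genuinely well-defined on the unlabeled tree $S$: since $\ch(v;S)$ makes sense for an abstract rooted tree and the variables $y_j$ commute, the product over the unordered set of non-root vertices is unambiguous; and it matches $\prod_{i=1}^n y_{\ch(i;T)}$ for any labeled representative $T$ because a shape isomorphism induces a bijection between non-root vertices of $S$ and the labels $\{1,\dots,n\}$ respecting outdegrees.

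There is essentially no obstacle here: the proof is a pure reindexing of a finite sum, and the one substantive input (that the contributed monomial is a shape invariant) is immediate from the fact that any rooted-tree isomorphism preserves outdegrees and fixes the root. The only thing one should not gloss over is the definition of $\alpha(S)$ and the verification that summing a constant $\alpha(S)$ copies of a single monomial is legitimate, i.e.\ that $\T{n}^{(S)}$ is finite and nonempty for each $S\in\UT{n}$ — both clear, since there are finitely many labelings and every unlabeled rooted tree on $n+1$ vertices admits at least one increasing labeling (label greedily from the root outward). I would present the argument in two or three sentences along the lines above, citing Theorem~\ref{T:trees} for the labeled expansion and leaving the bijective/counting details to the reader.
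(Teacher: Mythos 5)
Your proposal is correct and is essentially the paper's own argument: the paper proves this theorem implicitly by the same observation that the monomial in \eqref{E:trees} depends only on the shape of $T$, so grouping the sum over $\T{n}$ by shape and invoking the definition of $\alpha(T)$ as the number of increasing labelings yields \eqref{E:unlabeled:trees}. Nothing further is needed.
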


The coefficients $\alpha(T)$ appear in \cite{BroadhurstKreimer, HNT} as the \emph{Connes-Moscovici} coefficients. Given an unlabeled rooted tree $T$ with $n+1$ vertices,  let $b_1$, $b_2$, \ldots, $b_k$ be its main branches, i.e., the connected components of $T-\{0_T\}$, whose roots are the children of the root of $T$. Some of the $b_i$ may be equal. Let $\alltrees=\bigcup_{j\geq 0}\UT{j}$ be the set of all unlabeled rooted trees. For any $s \in \alltrees$, let $m_T(s)$ be the multiplicity of $s$ in the sequence $(b_1, \ldots, b_k)$. Then $m_T$ is a function from $\alltrees$ to $\ZZ_{\geq 0}$. We have thus
\begin{equation}\label{alpha}
\alpha(T) = \beta(m_T) \prod_{s \in \alltrees} \alpha(s)^{m_T(s)},
\end{equation}
where $\beta(m_T)$ is the number of set partitions of $[n]$, with blocks labeled with elements of $\alltrees$, such that for each $s \in \alltrees$, there are exactly $m_T(s)$ blocks labeled by $s$, all of size $\#s$, the number of vertices of $s$. 
This number is given by
\begin{equation}\label{beta}
\beta(m_T)=\frac{n!}{\prod_{s \in \alltrees} m_T(s)! (\#s)!^{m_T(s)}}. 
\end{equation}


\section{Special values of the coefficients $c_{\lambda}^n$}\label{S:combspecial}

In this section, we derive some interesting specializations and properties of the polynomials $U_n$ and their coefficients $c_{\lambda}^n$. Some of the results in this section appear in \cite{Comtet, MohammadNoori, BergeronReutenauer}, others are new: in particular, the observation that the generalized Stirling numbers $\pS{n}{k}_{q,1}$ appear in specializations of the polynomials $U_n$ (Section \ref{S:combspecial:gs2}), and the vanishing modulo $p$ of most of the coefficients of $U_n$ when $p$ is prime and $n$ is a power of $p$ (Section \ref{SS:mod p}).

\subsection{Factorials and Stirling numbers of the first kind}\label{S:combspecial:fs1}

If we set $y_i=1$ for all $i\geq 0$ in $U_n$ then, by Theorem~\ref{T:umbral}, this corresponds to taking $x_j=1$ for all $j\geq 1$ and $x_0=t$ in \eqref{E:product}, so we get
\begin{equation}\label{E:combspecial:fs1:rf}
U_n(1, 1,1 \ldots ; t)=\prod_{i=0}^{n-1} (t+i)=t(t+1)\cdots (t+n-1),
\end{equation}
a rising factorial. Since the latter expression coincides with the generating function $\sum_{k=1}^n c(n, k)t^k$ for the (signless) Stirling numbers of the first kind, $c(n, k)$, which counts the number of permutations in $S_n$ with exactly $k$ cycles, we deduce from \eqref{E:un:canonical} that 
\begin{equation}
\sum_{\lambda\vdash n-k} c^n_\lambda=c(n, k).
\end{equation}
This result appears in \cite[\S 5]{Comtet}, \cite[Prop.\ 9 (iii)]{MohammadNoori} and \cite[p.\ 274]{BergeronReutenauer}.

In particular, for $n\geq 1$,
\begin{align*}
\sum_{k=1}^n\sum_{\lambda\vdash n-k} c^n_\lambda =\sum_{k=1}^n c(n, k)=U_n(1, \ldots, 1, 1)=n!
\end{align*}
and 
\begin{align*}
\sum_{\lambda\vdash n-1} c^n_\lambda =c(n, 1)=(n-1)!.
\end{align*}

We remark that all of the above relations have straightforward bijective proofs. For example, by \eqref{E:trees}, we have that $\sum_{\lambda\vdash n-k} c^n_\lambda$ is the number of increasing rooted trees $T$ on the vertex set $\{0,1,\ldots,n\}$ for which the root has exactly $k$ children. By \cite[Prop.\ 1.5.5]{rS12}, this number is equal to $c(n, k)$, because there is a bijective correspondence between $\T{n}$ and the symmetric group $S_n$ under which trees with a root having $k$ children correspond to permutations in $S_n$ with $k$ left-to-right maxima, which in turn, under the \textit{fundamental bijection} (\cite[Prop.\ 1.3.1]{rS12}), correspond to permutations in $S_n$ with $k$ cycles.

\begin{remark}
Specializing $y_i$ at $q^i$ instead of $1$ does not provide any new identity, due to the homogeneity properties of the polynomials $U_n$: 
\[
U_n(1,q,q^2,\ldots; t) = q^n U_n\left(1,1,1,\ldots,\frac{1}{q} t\right).
\]
\end{remark}

\subsection{Generalized Stirling numbers, Stirling numbers of the second kind and Bell numbers}\label{S:combspecial:gs2}

The \emph{partition Stirling numbers}, or \emph{Stirling numbers of the second kind}, $\pS{n}{k}$, count the set partitions in $k$ blocks of a set with $n$ elements. It is well-known that the partition Stirling numbers are precisely the coefficients of the normally ordered form of $(x \partial_x)^n$, for $A=\ZZ[x]$:
\[
(x \partial_x)^n = \sum_k \pS{n}{k} x^k \partial_x^k.
\]
This identity leads to a natural generalization of the partition Stirling numbers, simply called \emph{generalized Stirling numbers} and denoted $\pS{n}{k}_{q,d}$: these are the coefficients in the normally ordered form of $(x^q \partial_x^d)^n$ in $\ZZ[x]$. More precisely:
\begin{equation}\label{E:def:Stirling}
\begin{aligned}
 (x^q \partial_x^d)^n &= x^{(q-d) n} \ \sum_k \pS{n}{k}_{q,d} x^k \partial_x^k  &\textrm{ if $q \ge d$,}\\
(x^q \partial_x^d)^n &=\phantom{x^{(q-d)}} \left(\sum_k \pS{n}{k}_{q,d} x^k \partial_x^k\right) \partial_x^{(d-q)n}  & \textrm{ if $q \le d$.}
\end{aligned}
\end{equation}
The generalized Stirling numbers 
have been studied extensively (see specially \cite[4.7.1]{Blasiak:thesis}, \cite[Note 18]{BlasiakFlajolet} and the references in the survey \cite{MansourSchork}), and in particular the question of finding a simple combinatorial interpretation for them has been raised. This problem was solved (even for more general monomials in $x$ and $\partial_x$) in \cite{Varvak}, in terms of rook placements on chessboards that are Young diagrams and, in \cite{EngbersGalvinHilyard}, in terms of constrained partitions of vertex sets of graphs.

Observe that, for $d=1$, the right-hand side of \eqref{E:def:Stirling}  is equal to the specialization $U_n(h,h',h'',\ldots;\partial_x)$, with $h=x^q$. Evaluate at $x=1$, and use that, for all $k$, $h^{(k)}(1)=(q)_k=q(q-1)\cdots(q-k+1)$, the falling factorial, to get
\begin{equation}\label{specialization at q_i}
U_n\left( (q)_0, (q)_1, (q)_2 , \ldots; \partial_x \right) =  \sum_k \pS{n}{k}_{q,1} \partial_x^k.
\end{equation}
Therefore, the specialization of $U_n$ at $y_k=(q)_k$, for $q$ a nonnegative integer, is the ordinary generating series for the generalized Stirling numbers $\pS{n}{k}_{q,1}$ and it follows that 
\begin{equation}\label{E:stirling q 1}
\pS{n}{k}_{q,1} = \sum_{\lambda\vdash n-k} c_{\lambda}^{n} \prod_i (q)_{\lambda_i}.
\end{equation}

The generalized Stirling numbers $\pS{n}{k}_{q,d}$, for arbitrary $d$, will be obtained in the same fashion from the polynomials $U_{n,d}$
in Section \ref{S:gen}.

Further specialization of \eqref{specialization at q_i} at $q=1$ gives 
\[
\eval{U_n}{y_0=y_1=1,\, y_i=0\ \forall i>1}=\sum_{k=1}^n \pS{n}{k}t^k,
\]
which is the generating function for the partition Stirling numbers (\emph{Touchard polynomials}).
In particular, $\displaystyle c^n_{1^{n-k}}=\pS{n}{k}$, a result that appears in \cite[\S 5]{Comtet}, \cite[Prop.\ 9 (ii)]{MohammadNoori} and \cite[p.\ 275]{BergeronReutenauer}. Taking $t=1$ we obtain the Bell number 
\[
B_n=\sum_{k=1}^n \pS{n}{k}=\eval{U_n}{y_0=y_1=t=1,\, y_i=0\ \forall i>1},
\]
which counts the number of set partitions of $[n]$.

For example, using the recurrence relation \eqref{E:prop:recrel}, we obtain the well-known relation for the Stirling numbers:
\begin{align*}
\pS{n}{k} =c^n_{1^{n-k}}=c^{n-1}_{1^{n-k}}+kc^{n-1}_{1^{n-k-1}}=\left\{ {n-1\atop k-1}\right\}+k\left\{ {n-1\atop k}\right\}.
\end{align*}

\subsection{Eulerian polynomials}\label{S:combspecial:hnte}

The coefficient of $q^k$ in $U_n(q, 1, 1, 1,\ldots; 1 )$
is the number of increasing trees $T\in \T{n}$  with exactly $k$ leaves. By \cite[Prop.\ 1.5.5]{rS12}, this is precisely the Eulerian number $A(n, k)$, equal to the number of permutations in $S_n$ with exactly $k-1$ descents. Thence, this specialization of $U_n$ is the Eulerian polynomial $A_n(q)$:
\[
U_n(q, 1, 1, \ldots; 1)= \sum_{\pi\in S_n} q^{1+\text{no.\ of descents of $\pi$}}=A_n(q).
\]
The same argument also shows that the coefficient of $q^k$ in $U_n(1, q, q, \ldots; 1)$ is the Eulerian number $A(n, n-k)$. Since the Eulerian polynomials are palindromic (i.e., $q^{n+1} A_n(1/q) = A_n(q)$), these properties, in conjunction, show that we have 
\[
A_n(q) = U_n(q, 1, 1, \ldots; 1)=qU_n(1, q, q, \ldots; 1).
\] 
Thus, for all $k$, 
\begin{equation}\label{E:special:A(n, k)}
\sum_{\lambda: \ell(\lambda)=k} c_{\lambda}^n =A(n, k+1)=\mbox{no.\ of $\pi\in S_n$ with exactly $k$ descents.}
\end{equation}
The above formula is implicit in \cite[Prop.\ 9 (iv)]{MohammadNoori}, as well as \cite[p.\ 275]{BergeronReutenauer}.

\subsection{The solution of the differential equation $x'(u) = y(x(u))$}

Given a formal series 
$
y(u)=\sum_{i=0}^{\infty} y_i \frac{u^i}{i!},
$
there is a unique formal series
$
x(u)=\sum_{n=1}^{\infty} x_n \frac{u^n}{n!}.
$
such that
$
\frac{dx}{du} = y(x(u)).
$
Hivert, Novelli and Thibon show in \cite[Formula (43)]{HNT} that its coefficients are given by 
\[
x_n = \sum_{T \in \T{n-1}} \prod_{i=0}^{n-1} y_{\ch(i;T)},
\]
which is the image of $U_n$ under the
left $R$-module map $\bigoplus_{k\geq 0}Rt^k\longrightarrow R$
that sends $t^k$ to $y_k$, for all $k \ge 0$,

\subsection{The coefficients $c^n_{\lambda}$ modulo a prime $p$, for $n=p^m$}\label{SS:mod p}

It is well-known that most Stirling numbers of both kinds $c(p,k)$ and $\pS{p}{k}$ vanish modulo $p$ when $p$ is prime. We will see that this property is shared by the coefficients $c^p_\lambda$ and, more generally, by $c^n_\lambda$, for $n=p^m$.

\begin{thm}\label{T:modp}
For any prime $p$, prime power $n=p^m$ and partition $\lambda$ with $|\lambda|\neq n-1$ and $|\lambda|$ not a multiple of $p$, we have $c^n_\lambda \equiv 0\ (\modd p)$. In particular, if $|\lambda|\neq 0, p-1$, then $c^p_\lambda \equiv 0\ (\modd p)$.
\end{thm}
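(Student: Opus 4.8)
The plan is to prove the following sharper statement: for $n=p^m$ with $m\ge 1$, the reduction $\overline{U_n}$ of $U_n$ modulo $p$, viewed as a polynomial in $t$ with coefficients in $\FF_p[y_i]$, involves only the $t$-powers $t,\ t^{p},\ t^{p^2},\ \dots,\ t^{p^m}$. Granting this, the theorem follows at once: the monomial attached to $c^n_\lambda$ in \eqref{E:un:canonical} carries $t^{\,n-|\lambda|}$, and among $1,p,\dots,p^m$ the only exponent not divisible by $p$ is $1$; so if $p\nmid|\lambda|$ then (since $p\mid n$) $n-|\lambda|$ is prime to $p$, and if moreover $|\lambda|\ne n-1$ then $n-|\lambda|\ne 1$, hence $n-|\lambda|$ is not a power of $p$ and $c^n_\lambda\equiv 0\pmod{p}$. (For $m=0$ the statement is vacuous, and $m=1$ gives the ``in particular'' clause.)

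\textbf{Setup.} First I would realize $U_n$ concretely as a differential operator. Apply Theorem~\ref{T:univpolys:univprop} over $\FF_p$ to the ring $B=\FF_p[y_0,y_1,\dots]$ with derivation $\Delta$ ($\Delta(y_i)=y_{i+1}$) and central element $y_0$: then $\delta^n=\sum_{k\ge 1}\overline{P_{n,k}}\,\Delta^k$ in $\operatorname{End}(B)$, where $\delta:=y_0\Delta$ and the $\overline{P_{n,k}}\in\FF_p[y_i]$ are the reductions of the coefficient polynomials of $U_n$ from Proposition~\ref{P:recrel}. The operators $\Delta^k$, $k\ge 0$, are left linearly independent over $B$: from $\sum_k b_k\Delta^k=0$, apply to $y_N$ with $N$ larger than every index occurring in the $b_k$ and use $\Delta^k(y_N)=y_{N+k}$ to force every $b_k=0$. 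Hence it suffices to compute the operator $\delta^{p^m}$ and see which powers of $\Delta$ occur.

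\textbf{Key lemma.} I would then prove, by induction on $m$, that
\[
\delta^{p^m}=\sum_{l=0}^{m}e_l\,\Delta^{p^l}\qquad\text{in }\operatorname{End}(B),\quad e_l\in B .
\]
The case $m=0$ is the definition of $\delta$, and $m=1$ is the classical formula for the $p$-th power of a function times a derivation, $(y_0\Delta)^p=y_0^p\,\Delta^p+(y_0\Delta)^{p-1}(y_0)\,\Delta$. For the inductive step, write $\delta^{p^{m+1}}=(\delta^{p^m})^p=\bigl(\sum_{l=0}^m\delta_l\bigr)^p$ with $\delta_l:=e_l\Delta^{p^l}$. Because distinct powers of $\Delta$ commute, the $\FF_p$-subspace $V=\sum_{l\ge0}B\,\Delta^{p^l}$ of $\operatorname{End}(B)$ is closed under commutators, $[\,b\Delta^{p^i},b'\Delta^{p^j}\,]=b\,\Delta^{p^i}(b')\,\Delta^{p^j}-b'\,\Delta^{p^j}(b)\,\Delta^{p^i}$, and such a bracket introduces no new $\Delta$-exponent; and the same classical formula gives $\delta_l^{\,p}=e_l^{\,p}\,\Delta^{p^{l+1}}+(e_l\Delta^{p^l})^{p-1}(e_l)\,\Delta^{p^l}\in V$, with exponents $p^l$ and $p^{l+1}$. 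By the Jacobson identity $(\sum_l\delta_l)^p=\sum_l\delta_l^{\,p}+\Lambda$, where $\Lambda$ is a sum of iterated commutators of the $\delta_l$ (this follows by induction from the two-term case $(a+b)^p=a^p+b^p+\sum_{i=1}^{p-1}s_i(a,b)$ with each $s_i$ a Lie polynomial), everything lies in $V$ and the only $\Delta$-exponents that can occur are $p^0,p^1,\dots,p^{m+1}$; this is the claim at $m+1$.

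\textbf{Conclusion, and the main difficulty.} Comparing $\sum_k\overline{P_{p^m,k}}\,\Delta^k=\delta^{p^m}=\sum_{l=0}^m e_l\,\Delta^{p^l}$ and using the left linear independence of the $\Delta^k$ over $B$, one gets $\overline{P_{p^m,k}}=0$ unless $k\in\{1,p,\dots,p^m\}$. Expanding $\overline{P_{p^m,k}}=\sum_{\lambda\vdash p^m-k}\overline{c^{p^m}_\lambda}\,y_0^{p^m-\ell(\lambda)}y_\lambda$ as in \eqref{E:un:canonical}, and noting that for fixed $k$ the monomials $y_0^{p^m-\ell(\lambda)}y_\lambda$ are pairwise distinct over $\lambda\vdash p^m-k$, each $\overline{c^{p^m}_\lambda}=0$ whenever $k=p^m-|\lambda|$ is not a power of $p$, which (as explained in the first paragraph) is exactly the theorem. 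I expect the one delicate point to be the exponent bookkeeping inside the key lemma: one must be sure that neither the $p$-th powers $\delta_l^{\,p}$ nor the commutator terms $\Lambda$ produce a power of $\Delta$ outside $\{\Delta,\Delta^{p},\dots,\Delta^{p^{m+1}}\}$, and this is secured precisely by the relations $[\Delta^{p^i},\Delta^{p^j}]=0$. Everything else is routine verification.
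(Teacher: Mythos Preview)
Your proof is correct and takes a genuinely different route from the paper's. The paper argues combinatorially: it builds an action of the cyclic group $\ZZ_n$ on the set $\T{n}$ of increasing trees (via a bijection with labeled set partitions carrying trees on the blocks), observes that this action preserves both $\ch(0;T)$ and the multiset $\{\ch(i;T)\}_{i\ge 1}$, and then, for $n=p^m$, shows that any fixed tree has root outdegree equal to $1$ or divisible by $p$; orbit--counting in \eqref{E:trees} gives the result. Your argument is algebraic, resting on the characteristic--$p$ identities $(aD)^p=a^pD^p+(aD)^{p-1}(a)\,D$ (Hochschild) and Jacobson's formula $(a+b)^p=a^p+b^p+\text{(Lie polynomial)}$, together with the observation that $V=\sum_{l\ge 0}B\,\Delta^{p^l}$ is closed under commutators because $[\Delta^{p^i},\Delta^{p^j}]=0$. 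In fact your key lemma yields a strictly stronger conclusion than the theorem as stated: you show that the only $t$--exponents surviving in $U_{p^m}\bmod p$ lie in $\{1,p,p^2,\dots,p^m\}$, whereas the paper's argument (and the theorem) only exclude exponents that are neither $1$ nor divisible by $p$; for instance, your result also kills the coefficient of $t^{2p}$ in $U_{p^2}$ modulo $p$. On the other hand, the paper's group--action method is more elementary (no restricted Lie algebra identities needed) and adapts immediately to the $U_{n,d}$ setting of Theorem~\ref{T:modp:Und} via a $\ZZ_d$--action on $\T{n}^d$; extending your approach to that case would require a separate argument.
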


Recall from Sections \ref{S:combspecial:fs1} and \ref{S:combspecial:gs2} that
\[
c(n,k)=\sum_{\lambda \vdash n-k} c_{\lambda}^n, \qquad  \pS{n}{k}=c_{1^{n-k}}^n 
\quad \text{and}\quad
\pS{n}{k}_{q,1} = \sum_{\lambda\vdash n-k} c_{\lambda}^{n} \prod_i (q)_{\lambda_i}.
\]
Thus, Theorem \ref{T:modp} says in particular that for any prime $p$, $n=p^m$ and $1<k<n$ not a multiple of $p$, the Stirling numbers of both kinds 
 $c(n,k)$ and $\pS{n}{k}$ as well as the generalized Stirling numbers $\pS{n}{k}_{q,1}$ are multiples of $p$.

\begin{proof}
For each $n>0$, we will build an action of the cyclic group $\ZZ_n$ on $\T{n}$. This action will preserve the outdegree (number of children) of vertex $0$, and will preserve as well the multiset of outdegrees of all other vertices. Therefore, it will be possible to write Formula \eqref{E:trees} as 
\[
U_n = \sum_{\omega \in \T{n}/\ZZ_n}  \# \omega \cdot   \left(\prod_{i=1}^n y_{\ch(i;T_{\omega})}\right) t^{\ch(0;T_{\omega})}.
\]
The sum is carried over all orbits $\omega$ for this action and $T_{\omega}$ is an arbitrarily chosen tree in $\omega$. 

Fix a prime $p$ and let $n=p^m$. We can assume that $m>0$. The cardinality of any orbit $\omega$ will also be a power or $p$, hence either $1$ or a multiple of $p$. We will show that if $T\in\T{n}$ is fixed under the action, then either $\ch(0;T)=1$ or $\ch(0;T)$ is a multiple of $p$, from which the theorem will follow.

The action of $\ZZ_n$ that we will define is based on the bijection between $\T{n}$ and the set $\Sigma_n$ of all sets 
\[
\{ (t_1, A_1), (t_2, A_2), \ldots, (t_k,A_k)\}
\]
where the $A_i$ are the blocks of a set partition of $[n]$ and, for each $i$, $t_i$ is an increasing tree whose number of vertices is $\# A_i$. This bijection is defined as follows: given $T\in \T{n}$, delete its root $0$. An increasing forest with vertex set $[n]$ is obtained. Let $t'_1$, $t'_2$, \ldots, $t'_k$ be its components (each is an increasing tree). Define $A_i$ as the set of vertices of $t'_i$. Enumerate the elements of $A_i$ in increasing order: $A_i=\{v_1< v_2 < \cdots < v_r\}$; and define $t_i$ as the tree obtained from $t'_i$ by replacing, for each $j$, the number  $v_j$ with its index $j$.

The natural action of the cyclic group $\ZZ_n$ on $[n]$ induces naturally an action on the set of all set partitions of $[n]$, and this in turn induces an action on $\Sigma_n$. The bijection $\Sigma_n \cong \T{n}$ is then used to transport this action to an action on $\T{n}$.

Assume $T\in\T{n}$ is fixed under this action of $\ZZ_n$ and let $\{ A_1, \ldots, A_k \}$ be the corresponding set partition of $[n]$. In particular, $\ch(0;T)=k$ and it remains to show that either $k=1$ or $p$ divides $k$. Since $T$ is fixed, $\ZZ_n$ permutes the blocks of the partition and we consider this induced action next. If a certain block $A_i$ is fixed, then necessarily $A_i=[n]$ and thus $k=1$. Otherwise, the size of the orbit of each block is divisible by $p$, thence so is $k$.
\end{proof}

\begin{remark}
It is also possible (and elementary) to prove the $n=p$ case of Theorem \ref{T:modp} using Formula \eqref{E:unlabeled:trees} and showing that the Connes--Moscovici coefficients $\alpha(T)$ of $U_p$ are zero modulo $p$ when $p$ is prime, unless the root of $T$ has one child or $p$ children. 
\end{remark}

In Section~\ref{SS:modp:Und} we generalize Theorem \ref{T:modp} to the coefficients $c_{\lambda}^{n,d}$.


\section{Combinatorial interpretations of the $c^n_\lambda$}\label{SS:diagrams}

In this section we investigate combinatorial interpretations of the coefficients $c^n_\lambda$.
A quite useful one follows directly from \eqref{E:trees}: $c^n_{\lambda}$ is the number of trees $T\in \T{n}$  whose internal vertices, other than the root, have as numbers of children the parts of $\lambda$.
 
In what follows, we will investigate another kind of combinatorial interpretation, based on the description of $U_n$ in Section \ref{SS:combint:esdm}. Our starting point is Theorem \ref{T:PD}. Define the \emph{type} of a subdiagonal partial map $g\in\PD{n}$ as the partition whose parts are the cardinalities of its fibers.

\begin{cor}\label{C:subdiagrams}
For any $n$ and any partition $\lambda$, the coefficient $c_{\lambda}^n$ is the number of subdiagonal partial maps from $[n]$ to $[n]$ of type $\lambda$.
\end{cor}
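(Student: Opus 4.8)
The plan is to deduce Corollary~\ref{C:subdiagrams} directly from Theorem~\ref{T:PD} by reading off the coefficient of the monomial $y_0^{n-\ell(\lambda)}y_\lambda t^{n-|\lambda|}$ from the expansion
\[
U_n = \sum_{g \in \PD{n}} \left(\prod_{i=1}^n y_{\#g^{-1}(\{i\})}\right) t^{n-\#\Dom(g)}
\]
and matching it against the canonical form \eqref{E:un:canonical} from Proposition~\ref{P:recrel}. So first I would fix a partition $\lambda$ with $|\lambda| = n-k$ and record what it means for a partial map $g \in \PD{n}$ to contribute to the coefficient $c^n_\lambda$, i.e.\ to the monomial $y_0^{n-\ell(\lambda)} y_\lambda t^k$. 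By definition, the term indexed by $g$ carries $t$ to the power $n - \#\Dom(g)$, so $g$ contributes to the $t^k$-part exactly when $\#\Dom(g) = n-k = |\lambda|$. Its $y$-part is $\prod_{i=1}^n y_{\#g^{-1}(\{i\})}$; collecting equal indices, this monomial is $y_0^{a_0} y_1^{a_1} \cdots$, where $a_j = \#\{\, i\in[n] : \#g^{-1}(\{i\}) = j\,\}$ counts how many of the $n$ potential targets in $[n]$ have fiber of size exactly $j$.

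The second step is to recognize that this $y$-monomial equals $y_0^{n-\ell(\lambda)} y_\lambda$ precisely when the multiset of \emph{nonempty} fiber sizes of $g$ is the partition $\lambda$ — that is, when $g$ has type $\lambda$ in the sense just defined. Indeed, the nonempty fibers of $g$ partition $\Dom(g)$, so the sum of their sizes is $\#\Dom(g)$; demanding type $\lambda$ forces $\#\Dom(g) = |\lambda| = n-k$, which is exactly the condition for $g$ to land in the $t^k$-part, so the two constraints are automatically compatible. Moreover, if the nonempty fiber sizes are the parts of $\lambda$, there are $\ell(\lambda)$ targets $i \in [n]$ with nonempty fiber and hence $n - \ell(\lambda)$ targets with empty fiber, giving $a_0 = n-\ell(\lambda)$; and for $j\ge 1$, $a_j$ is the multiplicity of $j$ in $\lambda$, so $y_1^{a_1}y_2^{a_2}\cdots = y_\lambda$. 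Thus the $g$-indexed term is $y_0^{n-\ell(\lambda)}y_\lambda t^{n-|\lambda|}$ if and only if $g$ has type $\lambda$, and conversely every monomial $y_0^{n-\ell(\lambda)}y_\lambda t^{n-|\lambda|}$ arising this way has $|\lambda| < n$ (since $\#\Dom(g) \le n-1$, because no $i$ can map below $1$ when $i=1$, forcing $1\notin\Dom(g)$), matching the index range in \eqref{E:un:canonical}.

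Putting these together, grouping the sum in Theorem~\ref{T:PD} by the type of $g$ yields
\[
U_n = \sum_{\lambda,\ |\lambda|<n} \bigl(\#\{g\in\PD{n} : g \text{ has type } \lambda\}\bigr)\, y_0^{n-\ell(\lambda)} y_\lambda\, t^{n-|\lambda|},
\]
and comparing with the canonical expansion \eqref{E:un:canonical}, whose monomials $y_0^{n-\ell(\lambda)}y_\lambda t^{n-|\lambda|}$ are pairwise distinct as $\lambda$ ranges over partitions of size $<n$, gives $c^n_\lambda = \#\{g\in\PD{n} : g \text{ has type } \lambda\}$. I do not anticipate a genuine obstacle here: the only thing to be careful about is the bookkeeping in the second step — checking that ``nonempty fiber sizes form $\lambda$'' captures simultaneously the exponent of $t$ and the exponent of $y_0$, so that no spurious compatibility condition is needed — and noting that the linear independence of the monomials $y_0^{n-\ell(\lambda)} y_\lambda t^{n-|\lambda|}$ in $R\langle t\rangle$ (already used implicitly in Proposition~\ref{P:recrel}) legitimizes the coefficient comparison.
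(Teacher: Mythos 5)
Your proposal is correct and is exactly the argument the paper intends: Corollary~\ref{C:subdiagrams} is stated as an immediate consequence of Theorem~\ref{T:PD}, obtained by grouping the partial maps $g\in\PD{n}$ by type and comparing coefficients with the canonical expansion \eqref{E:un:canonical}. Your bookkeeping (empty fibers giving the $y_0^{n-\ell(\lambda)}$ factor, $\#\Dom(g)=|\lambda|$ giving the $t$-exponent, and the distinctness of the monomials justifying the comparison) fills in precisely the details the paper leaves implicit.
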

An equivalent description is already given in \cite[Prop.\ 2]{MohammadNoori}, as an ``urns and balls model" (ball $i$ is placed in urn $j$ when $g(i)=j$). The interpretation in terms of partial maps provides an explicit formula for the coefficients $c_{\lambda}^n$. The map
\[
g \in \PD{n} \longmapsto \left(  g^{-1}(\{n-1\}),g^{-1}(\{n-2\}), \ldots, g^{-1}(\{1\}) \right)
\]
establishes a bijection from $\PD{n}$ to the set of all sequences $(R_1, R_2, \ldots,R_{n-1})$ of pairwise disjoint subsets of $[n]$ fulfilling $R_j \subseteq \{n-j+1,n-j+2,\ldots,n\}$, for all $j$. The type of $g$  is the multiset of the nonzero cardinalities of the corresponding sets $R_i$. The sequences $(R_1,R_2,\ldots,R_{n-1})$ with given cardinalities $i_1$, $i_2$, \ldots, $i_{n-1}$ are easily counted:  there are as many as 
\begin{equation}\label{E:diag}
\prod_{j=1}^{n-1} \binom{j - i_1 - i_2 - \ldots - i_{j-1}}{i_j}
\end{equation}
since one can build any such sequence by choosing first $R_{1}$ in $\{n\}$ with $i_1$ elements, then $R_{2} \subseteq \{n-1, n\}$ disjoint from $R_1$ and with $i_2$ elements. Once 
$R_{1}$, $R_{2}$, \ldots, $R_{j-1}$ have been chosen, the $i_j$ elements of $R_j$ have to be selected from the set
\[
\{n-j+1,n-j+2,\ldots, n\} \setminus (R_{1} \sqcup R_{2} \sqcup \cdots \sqcup R_{j-1}),
\]
whose cardinality is $j-i_1-i_2-\ldots-i_{j-1}$. For this, there are $\binom{j-i_1-i_2-\cdots-i_{j-1}}{i_j}$ possibilities.

This yields the following formula.
\begin{cor}\label{C:closed:form:d:is:1}
Let $n\geq k \geq 1$ and $\lambda$ be a partition of $n-k$. Then 
\begin{equation}\label{E:closed:form:d:is:1:a}
c^n_\lambda =\sum_{i_1, \ldots, i_{n-1}}
\prod_{j=1}^{n-1}
{j-i_{1}-\cdots -i_{j-1}\choose i_{j}}.
\end{equation} 
where the sum is carried over all sequences of nonnegative integers whose nonzero terms are the parts of $\lambda$.
\end{cor}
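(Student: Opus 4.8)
The plan is to derive Corollary~\ref{C:closed:form:d:is:1} by combining Corollary~\ref{C:subdiagrams} with the explicit bijection and counting argument that immediately precedes the statement in the excerpt. By Corollary~\ref{C:subdiagrams}, $c^n_\lambda$ counts the subdiagonal partial maps $g\in\PD{n}$ of type $\lambda$. The map $g\mapsto (g^{-1}(\{n-1\}),\ldots,g^{-1}(\{1\}))$ is a bijection from $\PD{n}$ to the set of sequences $(R_1,\ldots,R_{n-1})$ of pairwise disjoint subsets of $[n]$ with $R_j\subseteq\{n-j+1,\ldots,n\}$ for all $j$, and under this bijection the type of $g$ is the multiset of nonzero cardinalities of the $R_j$. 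So $c^n_\lambda$ equals the number of such sequences whose multiset of nonzero cardinalities is exactly $\lambda$.

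First I would fix a sequence of target cardinalities $(i_1,\ldots,i_{n-1})$ of nonnegative integers and count the sequences $(R_1,\ldots,R_{n-1})$ with $\#R_j=i_j$ for every $j$. Building such a sequence greedily from $j=1$ upward, once $R_1,\ldots,R_{j-1}$ are fixed the set $R_j$ must be an $i_j$-element subset of $\{n-j+1,\ldots,n\}\setminus(R_1\sqcup\cdots\sqcup R_{j-1})$, a set of size $j-i_1-\cdots-i_{j-1}$ (note $\{n-j+1,\ldots,n\}$ has $j$ elements and $R_1,\ldots,R_{j-1}$ are disjoint subsets of it). Hence the number of choices at step $j$ is $\binom{j-i_1-\cdots-i_{j-1}}{i_j}$, and multiplying over $j$ gives the product in~\eqref{E:diag}. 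This count is automatically zero unless all partial sums $i_1+\cdots+i_{j-1}\le j$, so no separate feasibility hypothesis is needed.

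Then I would sum over all admissible $(i_1,\ldots,i_{n-1})$: since distinct such sequences give disjoint sets of $(R_1,\ldots,R_{n-1})$-sequences, $c^n_\lambda$ is the sum of~\eqref{E:diag} over precisely those $(i_1,\ldots,i_{n-1})$ whose nonzero entries, as a multiset, form $\lambda$. This is exactly the right-hand side of~\eqref{E:closed:form:d:is:1:a}. A small consistency check worth including: the condition $|\lambda|=n-k$ forces $\sum_j i_j = n-k$, while the constraints $i_1+\cdots+i_{j-1}\le j$ are what make the binomials nonzero; together these encode that $k=n-\#\Dom(g)$ parts of $t$ appear, matching Theorem~\ref{T:PD}.

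I do not anticipate a serious obstacle here, as the corollary is essentially a bookkeeping consequence of the preceding discussion; the only point requiring a little care is checking that the greedy construction genuinely produces every valid sequence exactly once (it does, since the $R_j$ are uniquely recoverable from the sequence and the construction imposes no constraint beyond disjointness and the containment $R_j\subseteq\{n-j+1,\ldots,n\}$), and that summing over cardinality sequences with nonzero part-multiset $\lambda$ correctly partitions the set of partial maps of type $\lambda$.
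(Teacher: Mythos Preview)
Your proposal is correct and follows essentially the same approach as the paper: invoke Corollary~\ref{C:subdiagrams}, use the bijection $g\mapsto (g^{-1}(\{n-1\}),\ldots,g^{-1}(\{1\}))$ to sequences $(R_1,\ldots,R_{n-1})$, count such sequences with prescribed cardinalities greedily to obtain~\eqref{E:diag}, and sum over all cardinality sequences whose nonzero entries form $\lambda$. This is exactly the argument the paper gives in the paragraph preceding the corollary, so there is nothing to add.
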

Of course, the sum in the corollary above can be restricted to the sequences $(i_1,i_2,\ldots,i_{n-1})$ such that $i_1+i_2+\ldots+i_{j} \le j$, for all $j$, since the other sequences have a zero contribution. Under this hypothesis, the binomial coefficients ${j-i_{1}-\cdots -i_{j-1}\choose i_{j}}$ in \eqref{E:closed:form:d:is:1:a} can be expanded as $(j-i_{1}-\cdots -i_{j-1})!/i_j!(j-i_1-\cdots-i_j)!$.
Cancellations in the product of binomial coefficients leads to the following formula, due to Comtet \cite[Formula (8)]{Comtet}:
\begin{cor}[\cite{Comtet, BergeronReutenauer, MohammadNoori}]\label{C:Comtet}
Let $n\geq k \geq 1$ and $\lambda$ be a partition of $n-k$. Then 
\begin{equation}\label{E:Comtet}
c^n_\lambda = 
\frac{1}{(k-1)!} \frac{1}{\prod_i (\lambda_i) !} 
\sum_{i_1, \ldots, i_{n-1}} \prod_{j=1}^{n-1} (j-i_1-i_2-\cdots -i_{j-1})
\end{equation}
where the sum if carried over all sequences $(i_1,i_2,\ldots,i_{n-1})$ of nonnegative integers, whose nonzero terms 
 are the parts of $\lambda$, up to reordering, and such that $i_1+i_2+\cdots+i_{j} \le j$, for all $j$. 
\end{cor}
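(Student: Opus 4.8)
The plan is to start from Corollary~\ref{C:closed:form:d:is:1}, which already expresses $c^n_\lambda$ as a sum over sequences $(i_1,\ldots,i_{n-1})$ of nonnegative integers whose nonzero entries are the parts of $\lambda$, of the product $\prod_{j=1}^{n-1}\binom{j-i_1-\cdots-i_{j-1}}{i_j}$. As noted immediately after that corollary, we may discard all sequences violating $i_1+\cdots+i_j\le j$, since they contribute zero; so throughout we assume this constraint. On such a sequence, every binomial coefficient $\binom{j-i_1-\cdots-i_{j-1}}{i_j}$ is a genuine count with a nonnegative top entry, and we rewrite it as $\dfrac{(j-i_1-\cdots-i_{j-1})!}{i_j!\,(j-i_1-\cdots-i_j)!}$.

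Next I would telescope the product $\prod_{j=1}^{n-1}\binom{j-i_1-\cdots-i_{j-1}}{i_j}$. Writing $s_j = i_1+i_2+\cdots+i_j$ with $s_0=0$, the generic factor is $\dfrac{(j-s_{j-1})!}{i_j!\,(j-s_j)!}$. The numerator of the $j$-th factor is $(j-s_{j-1})!$ and the ``surviving'' part of the denominator of the $j$-th factor is $(j-s_j)!$; when we pass from $j$ to $j+1$, the numerator becomes $(j+1-s_j)! = (j+1-s_j)\cdot(j-s_j)!$, so the factor $(j-s_j)!$ from step $j$ cancels against all but one factor of the numerator at step $j+1$. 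Carrying this cancellation through the whole product, everything collapses except: the accumulated product $\prod_{j=1}^{n-1}\dfrac{1}{i_j!}$ coming from the $i_j!$ terms; the single residual linear factors $\prod_{j=1}^{n-1}(j-s_{j-1})$ (these are precisely the ``new'' factors $(j-s_{j-1})$ appearing at each step after telescoping the factorials); and the final leftover denominator $(n-1-s_{n-1})!$ from $j=n-1$. Since the nonzero $i_j$ are exactly the parts of $\lambda$, we have $s_{n-1}=|\lambda|=n-k$, so $n-1-s_{n-1} = k-1$, giving the leftover $\dfrac{1}{(k-1)!}$; and $\prod_j \dfrac{1}{i_j!} = \dfrac{1}{\prod_i \lambda_i!}$ because inserting zeros contributes only factors of $0!=1$. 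Pulling the two constants $\dfrac{1}{(k-1)!}$ and $\dfrac{1}{\prod_i\lambda_i!}$ out of the sum then yields exactly \eqref{E:Comtet}.

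I would present the telescoping either by a short induction on $n$ (showing $\prod_{j=1}^{m}\binom{j-s_{j-1}}{i_j} = \dfrac{1}{(m-s_m)!}\cdot\dfrac{1}{\prod_{j=1}^m i_j!}\cdot\prod_{j=1}^m(j-s_{j-1})$ for all $m\le n-1$), or by the direct bookkeeping argument above; the induction is cleaner to write and avoids hand-waving about ``all but one factor.'' The verification of the base case $m=1$ is immediate ($\binom{1}{i_1}$ with $i_1\in\{0,1\}$), and the inductive step is the single cancellation $(j-s_j)!\cdot(j+1-s_j)/(j+1-s_j)! = 1$ described above, combined with accumulating the new $1/i_{j+1}!$ and $(j+1-s_j)$ factors. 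Finally I would record the reindexing remark that the condition ``the nonzero terms of $(i_1,\ldots,i_{n-1})$ are the parts of $\lambda$'' is, under the running assumption $s_j\le j$ for all $j$, exactly the index set described in the statement of Corollary~\ref{C:Comtet}.

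The only real subtlety — and the step I would be most careful about — is bookkeeping the constraint $s_j\le j$: it is needed to justify that each binomial coefficient equals the factorial ratio (rather than being zero because its top argument is negative), and one must confirm that dropping the violating sequences really is harmless, which is exactly the observation already made in the text right before Corollary~\ref{C:closed:form:d:is:1}. Everything else is the routine telescoping computation, so I would keep the write-up brief, stating the inductive identity, checking the base case in one line, doing the one-line inductive step, and then substituting $s_{n-1}=n-k$.
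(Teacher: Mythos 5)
Your proposal is correct and follows exactly the paper's route: restrict the sum in Corollary~\ref{C:closed:form:d:is:1} to sequences with $i_1+\cdots+i_j\le j$ (the rest vanish), expand each binomial coefficient as a ratio of factorials, and telescope, leaving the constants $\frac{1}{(k-1)!}$ and $\frac{1}{\prod_i \lambda_i!}$ together with the linear factors $\prod_j (j-i_1-\cdots-i_{j-1})$. The paper only gestures at the ``cancellations''; your explicit inductive identity $\prod_{j=1}^{m}\binom{j-s_{j-1}}{i_j}=\frac{1}{(m-s_m)!}\frac{1}{\prod_{j=1}^m i_j!}\prod_{j=1}^m(j-s_{j-1})$ is a clean way to make that precise.
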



Corollaries \ref{C:closed:form:d:is:1} and \ref{C:Comtet} will be generalized in Corollary \ref{C:closed:form:d}.

\begin{exam}
Consider the case when $\lambda$ has $n-k$ parts, all equal to $1$. We have established in Subsection~\ref{S:combspecial:gs2} that $\displaystyle c^n_{1^{n-k}}$ is the partition Stirling number $\pS{n}{k}$. But $\pS{n}{k}=c^n_{1^{n-k}}$ also counts the subdiagonal partial maps from $[n]$ to $[n]$ of type $1^{n-k}$, which correspond to the \emph{rook placements} of $n-k$ rooks, all below the diagonal, in a $n \times n$ chessboard. This interpretation of the partition Stirling numbers is known, see \cite{Varvak}, and Corollary \ref{C:subdiagrams} is thus a generalization of it.
\end{exam}
\begin{exam}
Let us apply Corollary \ref{C:closed:form:d:is:1} to the partition $\lambda=(1^{n-k})$. Then, the sum in \eqref{E:closed:form:d:is:1:a} is over all bitstrings of length $n-1$ with $n-k$ occurrences of $1$. These sequences encode the $(n-k)$-subsets of $[n-1]$: to $(i_1,i_2,\ldots,i_{n-1})$ corresponds the subset $A=\{a_1, \ldots, a_{n-k} \}$ with $1\leq a_1< \cdots <a_{n-k}\leq n-1$ containing all $a$ such that $i_a=1$.  If $i_j=0$ then $\binom{j -i_1-i_2-\ldots-i_{j-1}}{i_j}=1$ and if $i_j=1$ then $j \in A$, say $j=a_s$, and $i_1+\cdots +i_{j-1}=s-1$. In this case,
\[
{j-i_{1}-\cdots -i_{j-1}\choose i_{j}}=j-i_{1}-\cdots -i_{j-1}=a_s -(s-1).
\]

Thus, for any $n$, $k$ with $1\leq k\leq n$, we get the following formula for the Stirling numbers of the second kind:
\begin{align}\label{E:Stirling:pos}
\pS{n}{k}=\sum_{1\leq a_1< \cdots <a_{n-k}\leq n-1} \prod_{s=1}^{n-k} (a_s -(s-1)) 
\end{align}
By standard algebraic manipulations, \eqref{E:Stirling:pos} can be re-written in the form of \cite[Exer.\ 45]{rS12}.
\end{exam}

\section{Generalization: Normal ordering for $(h \partial^d)^n$}\label{S:gen}

Our approach generalizes naturally to the study of the normal ordering of the operator $(h \partial^d)^n$, for any positive integer $d>0$.

\subsection{Formulas for the normal ordering of $(h \partial^d)^n$}

Recall the derivation $\Delta=\sum_{i\geq 0}y_{i+1}\partial_{y_i}$ of $R\langle t \rangle$. Given any $d\geq 1$, we recursively define a family $U_{n, d}$ of elements of $R\langle t \rangle$, extending the polynomials $U_n$, as follows:
\begin{equation}\label{E:univpolys:d:recdef}
U_{0, d}=1 \quad \text{and} \quad   
\forall n\geq 0,\ U_{n+1, d} = y_0 (\Delta + \rho_t)^d U_{n, d},
\end{equation}
where, as in \eqref{E:univpolys:recdef}, $\rho_t$ stands for the right multiplication by $t$ operator. 

When $d=1$ we retrieve the polynomials $U_n$, and it is not surprising that many of the properties which we have established for the $U_n$ generalize to the $U_{n, d}$. Among these we have that, for $n\geq 1$, $U_{n, d}$ is a sum of monomials of the form $P(y_0, \ldots, y_{(n-1)d})t^k$ with $P\in R$ and $d\leq k\leq nd$ and that $U_{n, d}$ is homogeneous: 
\begin{enumerate}[label=\textup{(\roman*)}]
\item of degree $n$ relative to the grading in which $y_i$ has degree $1$ and $t$ has degree $0$;
\item of degree $nd$ relative to the grading in which $y_i$ has degree $i$ and $t$ has degree $1$.
\end{enumerate}

The main property of these polynomials is the analogue of Theorem~\ref{T:univpolys:univprop}, which we state below.

\begin{thm}\label{T:univpolys:univprop:Und}
For any $n\geq 0$ and $d\geq 1$, there is a unique polynomial $U_{n, d}\in \bigoplus_{i\geq 0}Rt^i\subseteq R\langle t \rangle$ such that, for any ring $A$, derivation $\partial$ of $A$ and central element $h$ in $A$,
\begin{equation}\label{E:univpolys:univprop:Und}
\left(h\partial^d\right)^n 
= \eval{U_{n, d}}{y_i=h^{[i]},\, t=\partial}=U_{n, d}(h, h^{[1]}, h^{[2]}, \ldots; \partial),
\end{equation}
as endomorphisms of $A$.
\end{thm}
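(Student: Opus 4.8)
The plan is to mirror the proof of Theorem~\ref{T:univpolys:univprop} almost verbatim, the only difference being that one application of $\partial$ is replaced by $d$ applications, and $h\partial$ by $h\partial^d$. The key tool is again the ``twisted chain rule'' \eqref{E:univpolys:skewrel}, namely that the evaluation homomorphism $\ev:R\langle t\rangle\to\operatorname{End}(A)$ sending $y_i\mapsto h^{[i]}$ (as left multiplication) and $t\mapsto\partial$ satisfies $\partial\circ\ev(w)=\ev((\Delta+\rho_t)(w))$ for all $w\in R\langle t\rangle$. Iterating this identity $d$ times gives $\partial^d\circ\ev(w)=\ev((\Delta+\rho_t)^d(w))$.

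With that in hand the existence part is a straightforward induction on $n$. First I would note $\ev$ is well defined exactly as before, since the $h^{[i]}$ commute (here $h$ central is used). The base case $n=0$ is $U_{0,d}=1$. For the inductive step, using the iterated chain rule and the recursive definition \eqref{E:univpolys:d:recdef},
\begin{align*}
(h\partial^d)^{n+1}
&=h\circ\partial^d\circ\ev(U_{n,d})\\
&=h\circ\ev\bigl((\Delta+\rho_t)^d(U_{n,d})\bigr)\\
&=\ev\bigl(y_0(\Delta+\rho_t)^d(U_{n,d})\bigr)\\
&=\ev(U_{n+1,d}),
\end{align*}
where in the third line we use that $h=\ev(y_0)$ and that $\ev$ is a ring homomorphism. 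This also shows $U_{n,d}\in\bigoplus_{i\ge0}Rt^i$, since $(\Delta+\rho_t)$ maps this subspace to itself (as $\Delta(t)=0$ and $\rho_t$ just raises the power of $t$) and multiplication by $y_0$ preserves it as well; alternatively one invokes the homogeneity properties already recorded before the statement.

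For uniqueness I would specialize to the universal instance: take $A=R$, $h=y_0$, $\partial=\Delta$, so that $h^{[i]}=\Delta^i(y_0)=y_i$. If $V_{n,d}\in\bigoplus_{i\ge0}Rt^i$ also satisfies \eqref{E:univpolys:univprop:Und}, set $T=U_{n,d}-V_{n,d}=\sum_{i=0}^m P_i t^i$ with $P_i\in R$; then $\sum_i P_i\Delta^i=0$ as an operator on $R$. Applying this to $y_k$ for $k$ large enough that no $y_j$ with $j\ge k$ occurs in any $P_i$ yields $\sum_i P_i y_{k+i}=0$, and since the $y_{k+i}$ are distinct variables not appearing in the $P_i$, each $P_i=0$. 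Hence $T=0$. I do not anticipate a real obstacle here; the only point demanding a line of care is the $d$-fold iteration of \eqref{E:univpolys:skewrel}, which follows by a trivial induction on $d$ from the $d=1$ case already proved. Everything else is a transcription of the earlier argument.
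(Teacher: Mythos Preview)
Your proposal is correct and follows exactly the approach the paper takes: the paper's proof simply states that it is identical to that of Theorem~\ref{T:univpolys:univprop}, based on the iterated relation $\partial^d\circ\ev=\ev\circ(\Delta+\rho_t)^d$ obtained from \eqref{E:univpolys:skewrel}. You have supplied precisely those details.
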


\begin{proof}
The proof is identical to the proof of Theorem~\ref{T:univpolys:univprop} and is based on
\begin{equation}\label{E:univpolys:skewrel:d}
\partial^d\circ\ev = \ev \circ (\Delta+ \rho_t)^d
\end{equation}
which follows from \eqref{E:univpolys:skewrel}.
\end{proof}

To avoid repetition, we sketch briefly how the properties of the $U_n$ covered in Sections~\ref{S:univpolys} and~\ref{S:combint} generalize.
\begin{thm}\label{T:Und}
Let $n \ge 0$ and $d\geq 1$. 
\begin{enumerate}[label=\textup{(\alph*)}]
\item The polynomial $U_{n, d}$ is obtained by applying 
to the product
\begin{equation*}
\prod_{i=0}^{n-1} \left( x_{i}+ \cdots + x_2 + x_1 + x_0\right)^d
\end{equation*}
in the commutative polynomial ring $\ZZ[x_0, x_1,\ldots,x_n]$ the $\ZZ$-linear map from $\ZZ[x_0, x_1,\ldots,x_n]$ to $R\langle t \rangle$ defined on a basis of $\ZZ[x_0, x_1,\ldots,x_n]$ by
\[
x_n^{\alpha_n} x_{n-1}^{\alpha_{n-1}} \cdots x_1^{\alpha_1}x_0^k \longmapsto y_{\alpha_n} y_{\alpha_{n-1}} \cdots y_{\alpha_1}t^k.
\]
\item Let $\PD{n,d}$ be the set of all partial maps $[n]\times[d] \rightarrow [n]$ such that $\forall i, j$, $f(i,j) < i$. Then 
\begin{equation}\label{E:Und:Snd}
U_{n, d} = \sum_{f \in \PD{n,d}}   \left(\prod_{i=1}^{n} y_{\#f^{-1}(\{i\})}\right) t^{nd-\#\Dom(f)}. 
\end{equation}
\item For $F=(T_1,\ldots,T_d) \in \T{n}^d$, set $\ch(i;F)=\sum_{j=1}^d\ch(i;T_j)$. Then
\begin{equation}\label{E:Und:Tnd}
U_{n, d} = \sum_{F \in \T{n}^d}  \left( \prod_{i=1}^{n} y_{\ch(i;F)}   \right)  t^{\ch(0;F)}.
\end{equation}
\end{enumerate}
\end{thm}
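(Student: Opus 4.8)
The strategy is to prove parts (a), (b), (c) of Theorem~\ref{T:Und} in sequence, mirroring the chain of arguments used for $d=1$ in Sections~\ref{S:combint:umbral}--\ref{S:combint:it}, and to make explicit at each step what the exponent $d$ changes. The key observation tying everything together is that the recursion \eqref{E:univpolys:d:recdef} differs from \eqref{E:univpolys:recdef} only in that the operator $(\Delta+\rho_t)$ is applied $d$ times between successive left-multiplications by $y_0$; equivalently, on the side of the tensor-product calculus of Section~\ref{SS:combint:umbral}, each ``pass through $\partial$'' becomes ``$d$ passes through $\partial$.'' Once this is set up, each of (a), (b), (c) follows by the same bookkeeping as in the $d=1$ case.

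\textbf{Part (a).} I would first establish the umbral recipe. Arguing exactly as in the derivation preceding Theorem~\ref{T:umbral}, but iterating \eqref{E:umbral:dm} so that each application of $\partial^d$ contributes a factor $\bigl(\sum_{i=0}^{j-1}\partial_i\bigr)^d$ rather than $\sum_{i=0}^{j-1}\partial_i$, one obtains that $(h\partial^d)^n(a)$ equals $m_{n+1}\circ\bigl(\prod_{i=0}^{n-1}(\partial_i+\cdots+\partial_0)^d\bigr)(h\otimes\cdots\otimes h\otimes a)$. Translating this through the evaluation homomorphism $\ev$ (using \eqref{E:univpolys:skewrel:d}, already proved in Theorem~\ref{T:univpolys:univprop:Und}), and then invoking the uniqueness clause of Theorem~\ref{T:univpolys:univprop:Und} to identify the result with $U_{n,d}$, gives the stated umbral formula: $U_{n,d}$ is the image of $\prod_{i=0}^{n-1}(x_i+\cdots+x_0)^d$ under the exponent-lowering map. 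Alternatively, and perhaps more cleanly, one can verify directly by induction on $n$ that the image of that product satisfies the defining recursion \eqref{E:univpolys:d:recdef}; the inductive step amounts to checking that multiplying by the $n$-th factor $(x_{n-1}+\cdots+x_0)^d$ and re-indexing corresponds to applying $y_0(\Delta+\rho_t)^d$, which is the same compatibility already used implicitly for $d=1$.

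\textbf{Parts (b) and (c).} For (b), expand the product in (a) by choosing, for each of the $n$ factors and each of its $d$ ``copies,'' one summand $x_{f(i,j)}$ with $f(i,j)<i$; this is precisely a partial map in $\PD{n,d}$ once we record which choices land on $x_0$ (these contribute to the $t$-power) versus on $x_{\ge 1}$. Summing $x_i^{\#f^{-1}(\{i\})}$ over $i\ge 1$ and applying the exponent-lowering map yields $\prod_{i=1}^n y_{\#f^{-1}(\{i\})}$, while the number of times $x_0$ is chosen is $nd-\#\Dom(f)$ (since there are $nd$ choices in total and $\#\Dom(f)$ of them are nonzero), giving $t^{nd-\#\Dom(f)}$; this is \eqref{E:Und:Snd}. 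For (c), decompose $f\in\PD{n,d}$ into its $d$ ``slices'' $f_j\colon [n]\to\{0,\ldots,n-1\}$, $f_j(i)=f(i,j)$, each of which is an honest subdiagonal total map, hence an element of $\SD{n}\cong\T{n}$; under this bijection $\#f^{-1}(\{i\})=\sum_{j=1}^d\ch(i;T_j)=\ch(i;F)$, and likewise the total number of slots mapping to $0$ is $\sum_j\ch(0;T_j)=\ch(0;F)$, which rewrites \eqref{E:Und:Snd} as \eqref{E:Und:Tnd}. (Here one uses that $\PD{n,d}$ with the stated condition is in bijection with $\SD{n}^d$ via $f\mapsto(f_1,\ldots,f_d)$ — note every $i$ has a preimage requirement only when we also want the total-map picture; for the partial-map statement (b) we simply do not record the $x_0$-choices as part of the domain.)

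\textbf{Main obstacle.} The genuinely new bookkeeping, and the step I expect to require the most care, is the correct treatment of the $t$-exponent and the domain of the partial map in (b): in the $d=1$ case $\#\Dom(g)$ ranged over $\{0,\ldots,n\}$, whereas now the total number of ``slots'' is $nd$, and one must check that the homogeneity bounds $d\le k\le nd$ on the $t$-degree match $k=nd-\#\Dom(f)$ with $0\le\#\Dom(f)\le (n-1)d$ — the upper bound $\#\Dom(f)\le(n-1)d$ (equivalently $k\ge d$) holds because the first factor $(x_0)^d$ forces $d$ of the choices onto $x_0$. Beyond this, everything is a routine transcription of the $d=1$ arguments, so the proof can reasonably be presented in the compressed ``sketch'' style announced before the theorem statement, spelling out only the index-tracking in (b) and the slicing bijection $\PD{n,d}\leftrightarrow\SD{n}^d$ in (c).
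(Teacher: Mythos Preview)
Your proposal is correct and follows essentially the same approach as the paper. The only minor organizational difference is that the paper derives (b) and (c) in parallel---each by writing $\prod_i(\partial_i+\cdots+\partial_0)^d$ as the $d$-th power of the $d=1$ expansion (over $\PD{n}$ for (b), over $\T{n}$ for (c)) and then invoking the bijection $(\PD{n})^d\cong\PD{n,d}$---whereas you expand directly for (b) and then obtain (c) from (b) by slicing into $\SD{n}^d\cong\T{n}^d$.
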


\begin{proof}
 Firstly, as before, by \eqref{E:umbral:dm} and induction, we obtain
\begin{multline*}
h_n \partial^{d_{n-1}} h_{n-1} \partial^{d_{n-2}} \cdots h_1 \partial^{d_0} (h_0) \\
= 
m_{n+1} \circ
\left(
  \prod_{i=0}^{n-1}   \left(\partial_{i}+\cdots+\partial_{1} + \partial_0\right)^{d_{i}}\right)
  \left(h_{n}  \otimes \cdots \otimes h_1 \otimes h_0\right)
.
\end{multline*}
In particular, for any $a,h \in A$, and any $n, d \ge 0$,
\[
(h \partial^d)^n(a) 
= 
m_{n+1} \circ
\left(
\prod_{i=0}^{n-1} \left(  \partial_{i}+\cdots+\partial_{1} + \partial_0\right)^d \right)
\left(h  \otimes \cdots \otimes h \otimes h \otimes a \right).
\]
Description (a) of $U_{n,d}$ follows from this. Now,
\begin{align*}
\prod_{i=0}^{n-1} \left(  \partial_{i}+\cdots+\partial_{1} + \partial_0\right)^d
&=\left(
\sum_{g \in \PD{n}} \partial_0^{n-\#\Dom(g)} \prod_{i=1}^{n-1} \partial_i^{\#g^{-1}(\{i\})} 
\right)^d\\
&=
\sum_{(g_1,\ldots,g_d) \in (\PD{n})^d} \partial_0^{nd-\sum_i \# \Dom(g_i)} \prod_{i=1}^{n-1}  \partial_i^{\sum_{j} \# g_j^{-1}(\{i\})}.
\end{align*}
This formula becomes \eqref{E:Und:Snd} after making use of the bijection $(\PD{n})^d \cong \PD{n,d} $ that sends $(g_1,\ldots,g_d)$ to the partial map $(i, j) \mapsto g_j(i)$ defined on the pairs $(i,j)$ such that $i \in \Dom(g_j)$. 
Finally, we have also
\begin{align*}
\prod_{i=0}^{n-1} \left(  \partial_{i}+\cdots+\partial_{1} + \partial_0\right)^d
&=\left(
\sum_{T \in \T{n}} \prod_{i=0}^{n-1} \partial_i^{\ch(i;T)}
\right)^d\\
&=
\sum_{(T_1,\ldots,T_d) \in \T{n}^d} \prod_{i=0}^{n-1} \partial_i^{\sum_{j} \ch(i;T_j)}.
\end{align*}
Formula \eqref{E:Und:Tnd} follows from this.
\end{proof}

\subsection{The coefficients $c_{\lambda}^{n,d}$}\label{SS:cnd}

Let us single out the coefficients of $U_{n, d}$, as in the first part of Proposition~\ref{P:recrel}.

\begin{prop}\label{P:recrel:d}
Assume $n, d\geq 1$. There exist positive integers $c^{n, d}_\lambda$, where $\lambda$ runs through the set of partitions of size $0\leq |\lambda|\leq (n-1)d$ with at most $n-1$ parts, such that
\begin{equation}\label{E:und:canonical}
U_{n, d}=\sum_{k=d}^{nd}\sum_{\substack{\lambda\vdash nd-k\\ \ell(\lambda)\leq n-1}} c^{n, d}_\lambda y_0^{n-\ell(\lambda)}y_{\lambda}t^{k}.
\end{equation}
\end{prop}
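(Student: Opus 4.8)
The plan is to mirror the proof of the first part of Proposition~\ref{P:recrel}, now using the combinatorial description of $U_{n,d}$ furnished by Theorem~\ref{T:Und}(c), Formula~\eqref{E:Und:Tnd}, together with the two homogeneity properties of $U_{n,d}$ already recorded just before Theorem~\ref{T:univpolys:univprop:Und}. Concretely, I would start from the expansion $U_{n,d}=\sum_{F\in\T{n}^d}\bigl(\prod_{i=1}^n y_{\ch(i;F)}\bigr)t^{\ch(0;F)}$ and argue that each summand is, after collecting repeated $y$'s, of the form $y_0^{a_0}y_\lambda t^k$ for a suitable partition $\lambda$. The point is that the nonzero values among $\ch(1;F),\ldots,\ch(n;F)$ are the parts of a partition $\lambda$, that $\ell(\lambda)\le n-1$ since at most $n$ of these nonnegative integers can be nonzero but their total (the number of non-root nodes with a parent among $\{1,\ldots,n\}$) leaves at least one of them zero, and that the exponent $a_0$ of $y_0$ equals $n-\ell(\lambda)$ because exactly $\ell(\lambda)$ of the indices $i\in[n]$ contribute a $y_{\ch(i;F)}$ with $\ch(i;F)>0$ while the remaining $n-\ell(\lambda)$ contribute $y_0$.

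The size and degree bookkeeping then follows from counting edges in the forest $F=(T_1,\ldots,T_d)$: each of the $d$ increasing trees $T_j$ on $\{0,1,\ldots,n\}$ has exactly $n$ edges, so $\sum_{i=0}^n\ch(i;F)=nd$. Splitting off the contribution of the root gives $\ch(0;F)+\sum_{i=1}^n\ch(i;F)=nd$, i.e.\ $k+|\lambda|=nd$, which is exactly the index relation $\lambda\vdash nd-k$ appearing in~\eqref{E:und:canonical}. The ranges $d\le k\le nd$ come from the observation that the root of each $T_j$ has at least one child (since $n\ge 1$ forces every tree on $\ge 2$ vertices rooted at $0$ to have $\ch(0;T_j)\ge 1$), hence $\ch(0;F)\ge d$, while trivially $\ch(0;F)\le nd$; this matches the statement that $U_{n,d}=\sum_{k=d}^{nd}P(y_0,\ldots,y_{(n-1)d})t^k$. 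Equivalently, and perhaps more cleanly, all of these constraints are just restatements of the two homogeneity properties: homogeneity of degree $n$ in the grading $\deg y_i=1$, $\deg t=0$ forces $a_0+\ell(\lambda)=n$, and homogeneity of degree $nd$ in the grading $\deg y_i=i$, $\deg t=1$ forces $|\lambda|+k=nd$; I would present the argument via whichever of these two routes is shortest, probably invoking the already-established homogeneity.

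Positivity of the coefficients $c^{n,d}_\lambda$ — i.e.\ that they are genuinely \emph{positive} integers and not merely nonnegative — is immediate from~\eqref{E:Und:Tnd}: every coefficient is a cardinality of a set of forests, so it is a nonnegative integer, and I only need to exhibit, for each admissible $(\lambda,k)$ with $k=nd-|\lambda|$, $0\le|\lambda|\le(n-1)d$, $\ell(\lambda)\le n-1$, at least one forest $F\in\T{n}^d$ realizing the monomial $y_0^{n-\ell(\lambda)}y_\lambda t^k$. This existence is the only substantive point, and I expect it to be the main (though still modest) obstacle. One can do it greedily: distribute the parts of $\lambda$ among the vertices $1,2,\ldots,\ell(\lambda)$ as prescribed outdegrees, realize them one tree at a time (an increasing tree on $\{0,\ldots,n\}$ can be built by attaching vertex $j$ to any already-present smaller vertex, so one has complete freedom to assign a prescribed outdegree to a low-numbered vertex as long as enough higher-numbered vertices remain to be its children), and balance the leftover outdegree at the root across the $d$ trees so that each $T_j$ uses all $n$ of its edges; alternatively, since~\eqref{E:und:canonical} for $d=1$ is Proposition~\ref{P:recrel}, one may bootstrap from the $d=1$ case by taking $F=(T,T_0,\ldots,T_0)$ with $T$ realizing the monomial for $U_n$ and $T_0$ a single fixed tree, and then checking the resulting index shift is consistent — but the direct greedy construction is cleaner and I would use that. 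Finally, I would note that a recurrence relation for the $c^{n,d}_\lambda$ analogous to~\eqref{E:prop:recrel} can also be extracted from~\eqref{E:univpolys:d:recdef} by the same induction as in Proposition~\ref{P:recrel}, should it be wanted, but it is not needed for the present statement.
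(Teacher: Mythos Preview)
Your proposal is correct and essentially matches the paper's treatment, since the paper actually gives \emph{no} proof of Proposition~\ref{P:recrel:d}: it simply states the result, implicitly deferring to the argument for Proposition~\ref{P:recrel} and to the homogeneity properties recorded before Theorem~\ref{T:univpolys:univprop:Und}. Your route through the forest formula~\eqref{E:Und:Tnd} is the natural way to flesh this out, and is exactly what the paper exploits immediately afterward in Proposition~\ref{P:PD:d}.

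Two small points are worth tightening. First, your justification of $\ell(\lambda)\le n-1$ is garbled: the total $\sum_{i=1}^n \ch(i;F)\le (n-1)d$ does \emph{not} by itself force one of the $\ch(i;F)$ to vanish. The correct (and simpler) reason is that vertex $n$ is maximal in every increasing tree, so $\ch(n;F)=0$ automatically, leaving at most $n-1$ nonzero values among $\ch(1;F),\ldots,\ch(n;F)$. Second, your greedy construction for positivity does work, but the verification you defer is the inequality that, with $\mu_i=\lambda_i$ for $i\le\ell$ and $\mu_i=0$ otherwise, one has $\mu_{n-1}+\cdots+\mu_{n-j}\le jd$ for every $j$ (this is the realizability condition implicit in~\eqref{E:gen:d:def:cndl}). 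It follows from the averaging bound $\lambda_{\ell-s+1}+\cdots+\lambda_\ell\le \tfrac{s}{\ell}|\lambda|\le \tfrac{s}{\ell}(n-1)d$ together with $s\le\ell$; you should include this one-line check rather than leave it as an expectation.
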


It is obvious from the definition that $c^{n, 1}_\lambda=c^{n}_\lambda$. By Theorem~\ref{T:Und}, we can give several combinatorial descriptions of the coefficients $c^{n, d}_\lambda$.

Here is a description of $c_{\lambda}^{n,d}$ that generalizes the description of the $c_{\lambda}^n$ as counting subdiagonal partial maps (Theorem \ref{T:PD}). 

\begin{prop}\label{P:PD:d}
The coefficient $c^{n,d}_{\lambda}$ counts the elements of $\PD{n,d}$ of type $\lambda$, i.e., the partial maps $g$ from $[n] \times [d]$ to $[n]$ fulfilling the condition
\[
\forall i, j,\quad g(i,j)<i,
\] 
and whose cardinalities of the fibers form the parts of $\lambda$.
\end{prop}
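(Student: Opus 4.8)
The plan is to read off Proposition \ref{P:PD:d} as an immediate consequence of the explicit expansion \eqref{E:Und:Snd} in Theorem \ref{T:Und}(b), together with the defining expansion \eqref{E:und:canonical} of the coefficients $c^{n,d}_\lambda$ in Proposition \ref{P:recrel:d}. The key observation is that in \eqref{E:Und:Snd} each summand, indexed by a partial map $f\in\PD{n,d}$, already has exactly the monomial shape appearing on the right-hand side of \eqref{E:und:canonical}: the $t$-exponent is $nd-\#\Dom(f)$ and the $y$-part is $\prod_{i=1}^{n} y_{\#f^{-1}(\{i\})}$.

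First I would define, for $f\in\PD{n,d}$, the partition $\mu(f)$ to be the weakly decreasing rearrangement of the multiset of nonzero values among $\{\#f^{-1}(\{i\})\}_{i=1}^{n}$ — this is the ``type'' in the statement. Then I would observe that $\prod_{i=1}^{n} y_{\#f^{-1}(\{i\})} = y_0^{\,n-\ell(\mu(f))}\, y_{\mu(f)}$, since each fiber $f^{-1}(\{i\})$ of size $0$ contributes a factor $y_0$ and there are $n-\ell(\mu(f))$ such empty fibers among $i=1,\dots,n$, while the nonempty fibers contribute exactly $y_{\mu(f)}$. Next, $\sum_{i=1}^n \#f^{-1}(\{i\}) = \#\Dom(f)$ because the fibers over $1,\dots,n$ partition $\Dom(f)$ (note that $f(i,j)<i\le n$ forces all values to lie in $\{1,\dots,n-1\}\subseteq[n]$, so no domain element is lost). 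Hence $\#\Dom(f) = |\mu(f)|$, and the $t$-exponent of the summand for $f$ equals $nd - |\mu(f)|$. Therefore collecting the terms of \eqref{E:Und:Snd} according to the value of $\mu(f)$ gives
\[
U_{n, d} = \sum_{\lambda} \big(\#\{f\in\PD{n,d} : \mu(f)=\lambda\}\big)\; y_0^{\,n-\ell(\lambda)}\, y_{\lambda}\, t^{\,nd-|\lambda|},
\]
where $\lambda$ ranges over all partitions arising as some $\mu(f)$.

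Finally I would match this with \eqref{E:und:canonical}: since the monomials $y_0^{n-\ell(\lambda)}y_\lambda t^{nd-|\lambda|}$ are pairwise distinct for distinct $\lambda$ (the pair $(\ell(\lambda),|\lambda|)$, together with the multiplicities of the parts, is recoverable), comparing coefficients forces $c^{n,d}_\lambda = \#\{f\in\PD{n,d} : \mu(f)=\lambda\}$ for every $\lambda$ with $|\lambda|\le (n-1)d$ and $\ell(\lambda)\le n-1$, and also shows that no $f$ has type outside that range. This is exactly the assertion of the proposition. The only mildly delicate point — and the one I would state carefully rather than an actual obstacle — is the bookkeeping identifying ``empty fibers over $1,\dots,n$'' with the power of $y_0$ and checking $\ell(\lambda)\le n-1$ (equivalently, that at least one fiber among $i=1,\dots,n$ is empty, which holds because $f$ never takes the value $n$); everything else is a direct transcription of Theorem \ref{T:Und}(b).
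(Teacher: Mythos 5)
Your proof is correct and is exactly the route the paper takes: the paper presents Proposition~\ref{P:PD:d} as an immediate consequence of the expansion \eqref{E:Und:Snd} from Theorem~\ref{T:Und}(b), compared term by term with the canonical form \eqref{E:und:canonical}, which is precisely the bookkeeping you carry out. Your added checks (empty fibers accounting for the power of $y_0$, $\#\Dom(f)=|\mu(f)|$, and the fiber over $n$ being empty so that $\ell(\lambda)\le n-1$) are the details the paper leaves implicit, and they are all accurate.
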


Below we generalize Corollaries~\ref{C:closed:form:d:is:1} and~\ref{C:Comtet}.
\begin{cor}\label{C:closed:form:d}
Let $n, d\ge 1$, and $k$ such that $nd \ge k \ge d$. Let $\lambda $ be a partition of $nd-k$ with length at most $n-1$. 
\begin{enumerate}[label=\textup{(\alph*)}]
\item We have
\begin{equation}\label{E:gen:d:def:cndl}
c^{n, d}_\lambda=
\sum_{i_1, \ldots, i_{n-1}}
\prod_{j=1}^{n-1}
{jd-i_{1}-\cdots -i_{j-1}\choose i_{j}},
\end{equation}
where the sum is carried over all sequences $i_1, \ldots, i_{n-1}$ of nonnegative integers whose nonzero terms are the parts of $\lambda$ (up to reordering).
\item We have the following generalization of Comtet's formula (Corollary \ref{C:Comtet}):
\begin{equation}\label{E:cnd:Comtet}
c^{n, d}_\lambda=\frac{1}{(k-1)!} \frac{1}{\prod_i (\lambda_i !)}
\sum_{i_1, \ldots, i_{n-1}}
\prod_{j=1}^{n-1}
(jd-i_{1}-\cdots -i_{j-1}),
\end{equation}
where the sum is carried over all sequences $i_1,i_2,\ldots,i_{n-1}$ of nonnegative integers whose nonzero terms are the parts of $\lambda$ (up to reordering) such that $i_1+i_2+\cdots+i_j \le jd$, for all $j$.
\item Let $\M(n)$ be the set of all lower triangular arrays $a=(a_{i,j})_{1 \le j < i \le n}$ of nonnegative integers. To any such array associate the sequence $(r_2(a),r_3(a),\ldots,r_{n}(a))$ of its row sums ($r_i(a)=\sum_j a_{i,j}$), and the sequence $(c_1(a),c_2(a),\ldots, c_{n-1}(a))$ of its column sums ($c_j(a)=\sum_i a_{i,j}$). Then:
\begin{equation}\label{E:inv factorials}
c_{\lambda}^{n,d} = \sum_{a} \frac{1}{\prod_{i,j} (a_{i,j}!)} \prod_{i=2}^n (d)_{r_i(a)},
\end{equation}
where the sum is carried over all $a\in \M(n)$ whose nonzero column sums equal the parts of $\lambda$, up to reordering. 
\end{enumerate}
\end{cor}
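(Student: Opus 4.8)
The plan is to prove all three identities by refining the count behind Proposition~\ref{P:PD:d}: $c^{n,d}_\lambda$ is the number of partial maps $g\colon[n]\times[d]\to[n]$ with $g(i,\ell)<i$ for all $(i,\ell)$ whose fiber cardinalities are the parts of $\lambda$, and (a), (b), (c) simply organize these maps in three different ways and count within each class.

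For (a) I would repeat verbatim the argument preceding Corollary~\ref{C:closed:form:d:is:1}. A map $g\in\PD{n,d}$ is determined by the sequence $\bigl(g^{-1}(\{n-1\}),\dots,g^{-1}(\{1\})\bigr)$ of its fibers; since $g(i,\ell)<i$, the fiber $R_j:=g^{-1}(\{n-j\})$ is contained in $\{n-j+1,\dots,n\}\times[d]$, a set of size $jd$, and the $R_j$ are pairwise disjoint, while conversely every sequence $(R_1,\dots,R_{n-1})$ of pairwise disjoint subsets with $R_j\subseteq\{n-j+1,\dots,n\}\times[d]$ arises from a unique such $g$, whose type is the multiset of nonzero cardinalities $\#R_j$. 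Counting those with $\#R_j=i_j$ greedily (having chosen $R_1,\dots,R_{j-1}$, which already lie in $\{n-j+1,\dots,n\}\times[d]$ and fill $i_1+\dots+i_{j-1}$ of its $jd$ elements, there are $\binom{jd-i_1-\dots-i_{j-1}}{i_j}$ choices for $R_j$) and summing over all sequences whose nonzero terms are the parts of $\lambda$ gives \eqref{E:gen:d:def:cndl}.

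Part (b) then follows from (a) exactly as Corollary~\ref{C:Comtet} follows from Corollary~\ref{C:closed:form:d:is:1}: the summands with $i_1+\dots+i_j>jd$ vanish, each remaining binomial is rewritten as $(jd-s_{j-1})!/\bigl(i_j!\,(jd-s_j)!\bigr)$ with $s_j=i_1+\dots+i_j$, and since $(j+1)d-s_j=(jd-s_j)+d$ the factorials cancel across consecutive $j$; collecting the boundary factorials (from $j=1$ and $j=n-1$, using $s_{n-1}=nd-k$) together with the factors $\prod i_j!=\prod_i\lambda_i!$ yields \eqref{E:cnd:Comtet}.

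For (c) I would instead classify $g\in\PD{n,d}$ by the array $a=(a_{i,m})_{1\le m<i\le n}\in\M(n)$ with $a_{i,m}=\#\{\ell\colon g(i,\ell)=m\}$: its row sum $r_i(a)=\#\{\ell\colon(i,\ell)\in\Dom(g)\}\le d$ and its column sum $c_m(a)=\#g^{-1}(\{m\})$, so $g$ has type $\lambda$ precisely when the nonzero column sums of $a$ are the parts of $\lambda$. Row by row one distributes the $d$ pairs $(i,1),\dots,(i,d)$ among the fibers $1,\dots,i-1$ and the ``undefined'' class, so the number of $g$ realizing a given $a$ is $\prod_{i=2}^n\binom{d}{a_{i,1},\dots,a_{i,i-1},\,d-r_i(a)}=\dfrac{1}{\prod_{i,m}a_{i,m}!}\prod_{i=2}^n(d)_{r_i(a)}$, and summing over all $a\in\M(n)$ whose nonzero column sums are the parts of $\lambda$ gives \eqref{E:inv factorials}. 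The only steps needing genuine care are checking that the two encodings in (a) and (c) are bijections onto the stated index sets (in particular the nested inclusions $R_1\cup\dots\cup R_{j-1}\subseteq\{n-j+1,\dots,n\}\times[d]$, which make the greedy count exact), and the telescoping in (b), where for $d>1$ consecutive factorials cancel not to a single integer but to a product of $d$ of them, so the extraction of the prefactor must be handled attentively.
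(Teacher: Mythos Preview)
Your proposal is correct and follows essentially the same route as the paper. For (a) and (b) the paper merely says the proofs are ``straightforward adaptations'' of those of Corollaries~\ref{C:closed:form:d:is:1} and~\ref{C:Comtet} and omits them, which is exactly what you sketch; for (c) the paper uses the same encoding $a_{i,j}=\#\{\ell:g(i,\ell)=j\}$ and counts the preimages via $\binom{d}{r_i(a)}\binom{r_i(a)}{a_{i,1},\ldots,a_{i,i-1}}$, which is your multinomial $\binom{d}{a_{i,1},\ldots,a_{i,i-1},d-r_i(a)}$ split into two steps. Your caveat about the telescoping in (b) for $d>1$ (where the ratio $((j{+}1)d-s_j)!/(jd-s_j)!$ contributes $d$ factors rather than one) is well taken and worth keeping an eye on.
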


\begin{proof}
The proofs of (a) and (b) are straightforward adaptations of those of Corollaries \ref{C:closed:form:d:is:1} and \ref{C:Comtet}, and as such are omitted.

Let us prove (c). Associate to any $g\in\PD{n,d}$ the array $a=(a_{i,j})\in \M(n)$ such that $a_{i,j}=\# \{k \in [d]\,|\,g(i,k)=j\}$. Given $a \in \M(n)$, the elements $g \in \PD{n,d}$ corresponding to $a$ are obtained by: 
\begin{itemize}
\item Firstly, choosing for each $i$, a subset $S_i \subseteq [d]$ with $r_i(a)$ elements (to be the set of all $k$ such that $(i,k) \in\Dom(g)$). There are $\binom{d}{r_i(a)}$ possibilities for this choice.
\item Next, choosing for each $i$, and each $k \in S_i$, the value of $g(i,k)$ in $[i-1]$,
so that each $j \in [i-1]$ is chosen $a_{i,j}$ times. There are $\binom{r_i(a)}{a_{i,1},a_{i,2},\ldots,a_{i,i-1}}$ possibilities for this choice. 
\end{itemize}
This yields the formula
\[
c_{\lambda}^{n,d} = \sum_a \prod_i \binom{d}{r_i(a)} \binom{r_i(a)}{a_{i,1},a_{i,2},\ldots, a_{i,i-1}},
\]
where the sum is carried over all $a\in \M(n)$ with nonzero column sums equal to the parts of $\lambda$. This formula simplifies into that of the corollary.
\end{proof}

\begin{exam}\label{Ex:c33}
Let us compute $c^{3, 3}_\lambda$, i.e., the coefficients occurring in the normally ordered form of $(h \partial^3)^3$, using \eqref{E:gen:d:def:cndl}. In general, we need to determine all different $(n-1)$-tuples $(i_1, \ldots, i_{n-1})$ which can be obtained by permuting the parts of $\lambda$ and adding zero parts, if necessary. However, these terms will give a zero contribution unless 
\begin{equation}\label{E:gen:d:admissible:comp}
i_1+\cdots+i_j\leq jd, \quad \mbox{for all $1\leq j\leq n-1$,} 
\end{equation}
which narrows down the number of $(n-1)$-tuples to consider, as will now be illustrated.

We need to consider partitions of size at most $6$ with no more than $2$ parts.
For example, if $\lambda=4$, then the possibilities are $(4, 0)$ and $(0, 4)$, but the former violates \eqref{E:gen:d:admissible:comp}, so we get
$$
c^{3, 3}_4={3\choose 0}{6\choose 4}=15.
$$
Similarly, for $\lambda=(2, 1)$ the possibilities are $(2, 1)$ and $(1, 2)$, giving
$$
c^{3, 3}_{2, 1}={3\choose 2}{4\choose 1}+{3\choose 1}{5\choose 2}=42.
$$
Proceeding as above, we obtain all of the coefficients $c^{3, 3}_\lambda$ shown below.
\[
\begin{array}{c|*{16}{c}} 
\lambda & {\emptyset} & { \Yboxdim{4pt} \yng(1)} & { \Yboxdim{4pt} \yng(1,1)} & { \Yboxdim{4pt} \yng(2)} & { \Yboxdim{4pt} \yng(2,1)} & { \Yboxdim{4pt} \yng(3)} & { \Yboxdim{4pt} \yng(3,1)} & { \Yboxdim{4pt} \yng(2,2)}& { \Yboxdim{4pt} \yng(4)}& { \Yboxdim{4pt} \yng(4,1)}& { \Yboxdim{4pt} \yng(3,2)}& { \Yboxdim{4pt} \yng(5)}& { \Yboxdim{4pt} \yng(6)}& { \Yboxdim{4pt} \yng(5,1)}& {\Yboxdim{4pt} \yng(4,2)}& { \Yboxdim{4pt} \yng(3,3)}\\[4pt]
 \hline
 \vphantom{\displaystyle \int} c^{3, 3}_{\lambda}&1 &9 &15& 18 &42& 21& 33 &18&15&15&15&6&1&3&3&1
\end{array}
\]
Note that $\displaystyle \sum_{\lambda} c^{3, 3}_{\lambda}=216=(3!)^3$.
\end{exam}

\begin{cor}\label{C:sumc:d:is:factorial}
Let $n, d\geq 1$. Then
\begin{equation*}
\sum_{k=d}^{nd}\sum_{\lambda\vdash nd-k} c^{n, d}_\lambda =n!^d \quad\mbox{and}\quad
\sum_{\lambda\vdash (n-1)d} c^{n, d}_\lambda =(n-1)!^d. 
\end{equation*}
\end{cor}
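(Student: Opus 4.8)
The plan is to obtain both identities as specializations of the universal polynomials $U_{n,d}$, exactly as the corresponding facts for $U_n$ were derived in Section~\ref{S:combspecial:fs1} by setting $y_i=1$. First I would set $y_i=1$ for all $i\ge 0$ in \eqref{E:und:canonical}. By Theorem~\ref{T:Und}(a), this specialization of $U_{n,d}$ is the image of $\prod_{i=0}^{n-1}(x_i+\cdots+x_0)^d$ under the linear map obtained from the one in the theorem by further specializing, namely the map sending $x_j\mapsto 1$ for $j\ge 1$ and $x_0\mapsto t$. Hence
\[
U_{n,d}(1,1,1,\ldots;t)=\prod_{i=0}^{n-1}(t+i)^d=\bigl(t(t+1)\cdots(t+n-1)\bigr)^d .
\]
On the other hand, setting $y_i=1$ in \eqref{E:und:canonical} collapses the double sum to $\sum_{k=d}^{nd}\bigl(\sum_{\lambda\vdash nd-k}c^{n,d}_\lambda\bigr)t^k$.

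Next I would evaluate at $t=1$. The left-hand side becomes $(1\cdot 2\cdots n)^d=n!^d$, and the right-hand side becomes $\sum_{k=d}^{nd}\sum_{\lambda\vdash nd-k}c^{n,d}_\lambda$; since the parts of $\lambda$ range only over $|\lambda|=nd-k$ with $d\le k\le nd$, this is precisely $\sum_{k}\sum_{\lambda\vdash nd-k}c^{n,d}_\lambda$ with $\lambda$ running over all partitions of size $\le(n-1)d$ with at most $n-1$ parts, matching the statement. This gives the first identity. For the second identity, I would instead extract the coefficient of the lowest power of $t$ in $U_{n,d}(1,1,\ldots;t)$: from the product formula this coefficient is the coefficient of $t^d$ in $\bigl(t(t+1)\cdots(t+n-1)\bigr)^d$, which is $\bigl((n-1)!\bigr)^d$ (each of the $d$ factors contributes its lowest-degree term $t\cdot 1\cdot 2\cdots(n-1)$). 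Comparing with \eqref{E:und:canonical}, the coefficient of $t^d$ is $\sum_{\lambda\vdash nd-d}c^{n,d}_\lambda=\sum_{\lambda\vdash(n-1)d}c^{n,d}_\lambda$, which yields the claim. Alternatively, one can note that the $y_i=1$, $t=1$ specialization of the formula in Theorem~\ref{T:Und}(c), or the count of $\T{n}^d$ via Theorem~\ref{T:Und}(b)--(c), gives the same totals, since $\#\T{n}=n!$.

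There is essentially no serious obstacle here: the argument is a direct transcription of the $d=1$ case, and the only point requiring a moment's care is the bookkeeping of the index ranges — checking that summing $c^{n,d}_\lambda$ over all partitions $\lambda$ of size at most $(n-1)d$ with at most $n-1$ parts coincides with the double sum in \eqref{E:und:canonical} (so that no coefficient is counted twice or omitted), and, for the second identity, that the lowest power of $t$ occurring is exactly $t^d$, with the stated coefficient. Both follow immediately from the homogeneity properties of $U_{n,d}$ already recorded after \eqref{E:univpolys:d:recdef}, namely that $U_{n,d}$ is homogeneous of degree $nd$ in the grading where $y_i$ has degree $i$ and $t$ has degree $1$, together with the fact that every monomial of $U_{n,d}$ (for $n\ge1$) carries a factor $y_0$ and a positive power of $t$ between $d$ and $nd$.
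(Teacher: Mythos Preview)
Your proof is correct, but it takes a different route from the paper. The paper argues directly from the combinatorial interpretation of Proposition~\ref{P:PD:d}: the total $\sum_{k,\lambda} c^{n,d}_\lambda$ is simply $\#\PD{n,d}$, and since each pair $(i,j)\in[n]\times[d]$ independently has $i$ options (undefined, or one of $1,\ldots,i-1$), this cardinality is $\prod_{i=1}^n i^d=n!^d$. The second sum counts those $g\in\PD{n,d}$ with maximal domain $(\{2,\ldots,n\}\times[d])$, giving $\prod_{i=2}^n(i-1)^d=(n-1)!^d$.

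Your approach instead specializes $y_i=1$ via the umbral description of Theorem~\ref{T:Und}(a) to obtain $U_{n,d}(1,1,\ldots;t)=\prod_{i=0}^{n-1}(t+i)^d$, and then reads off the value at $t=1$ and the coefficient of $t^d$. This is the natural $d$-analogue of the argument in Section~\ref{S:combspecial:fs1}, and has the pleasant side effect of identifying the full generating polynomial $\sum_k\bigl(\sum_{\lambda\vdash nd-k}c^{n,d}_\lambda\bigr)t^k$ as the $d$-th power of the rising factorial, which is more information than the corollary asks for. The paper's argument, on the other hand, is slightly more direct and makes the bijective content transparent. Your brief alternative via Theorem~\ref{T:Und}(b)--(c) and $\#\T{n}=n!$ is essentially the paper's argument in disguise, since $\PD{n,d}\cong(\PD{n})^d\cong\T{n}^d$.
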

\begin{proof}
We will use the description of $c^{n, d}_\lambda$ provided in Proposition \ref{P:PD:d}. The first sum is just $\#\PD{n, d}=n!^d$. For the second sum, observe that if $g\in\PD{n, d}$ has type $\lambda$ then $\#\Dom(g)=|\lambda|$ and $(n-1)d$ is the maxim possible cardinality of the domain of an element in $\PD{n, d}$. So, the second sum counts the number of functions $g$ from $([n]\setminus\{1\}) \times [d]$ to $[n]$ satisfying $g(i,k)<i$ for all $i, k$, which equals 
$(n-1)!^d$.
\end{proof}

\subsection{Application to generalized Stirling numbers}\label{SS:g S}

To finish this section, we extend the result of Section \ref{S:combspecial:gs2} giving the generalized Stirling numbers $\pS{n}{k}_{q,1}$ as the coefficients of $U_n((q)_0,(q)_1,(q)_2,\ldots;t)$.
\begin{prop}\label{P:stirling q d}
Let $q$, $d$, $n$, $k$ be nonnegative integers, with $q \ge d$. The generalized Stirling number $\pS{n}{k}_{q,d}$, defined by \eqref{E:def:Stirling}, is given by
\begin{equation}\label{E:stirling q d}
\pS{n}{k}_{q,d} = \sum_{\substack{\lambda\vdash nd-k\\ \ell(\lambda)\leq n-1}} c_{\lambda}^{n,d} \prod_i (q)_{\lambda_i},
\end{equation}
where $(q)_j$ stands for the falling factorial $q(q-1)\cdots (q-j+1)$. 
\end{prop}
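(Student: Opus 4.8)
The strategy is to mimic exactly the argument used for the $d=1$ case in Section~\ref{S:combspecial:gs2}, now leaning on Theorem~\ref{T:univpolys:univprop:Und} in place of Theorem~\ref{T:univpolys:univprop}. The key observation is that in \eqref{E:def:Stirling}, with $q\ge d$, the operator $(x^q\partial_x^d)^n$ acting on $A=\ZZ[x]$ is precisely the specialization of $U_{n,d}$ at $y_i=h^{[i]}$, $t=\partial_x$, with $h=x^q$. Indeed, Theorem~\ref{T:univpolys:univprop:Und} gives $(h\partial_x^d)^n=U_{n,d}(h,h',h'',\ldots;\partial_x)$, and $h=x^q$ is central in $\ZZ[x]$.

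\textbf{Main steps.} First I would write $(x^q\partial_x^d)^n = U_{n,d}(x^q,(x^q)',(x^q)'',\ldots;\partial_x)$ using Theorem~\ref{T:univpolys:univprop:Und}. Next, I would evaluate both sides at $x=1$. On the left, by the definition \eqref{E:def:Stirling} (using $q\ge d$), the prefactor $x^{(q-d)n}$ becomes $1$ and we obtain $\sum_k \pS{n}{k}_{q,d}\,\partial_x^k$ (this is an identity of operators, so ``evaluating at $x=1$'' means: substitute $x=1$ into every coefficient appearing to the left of the powers of $\partial_x$ in the normally ordered form). On the right, each $h^{[i]}=(x^q)^{(i)}$ evaluates at $x=1$ to the falling factorial $(q)_i=q(q-1)\cdots(q-i+1)$; hence the right-hand side becomes $U_{n,d}((q)_0,(q)_1,(q)_2,\ldots;\partial_x)$. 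This yields
\[
U_{n,d}\bigl((q)_0,(q)_1,(q)_2,\ldots;\partial_x\bigr)=\sum_k \pS{n}{k}_{q,d}\,\partial_x^k.
\]
Finally, I would plug the canonical expansion \eqref{E:und:canonical} of $U_{n,d}$ into the left-hand side: substituting $y_0=(q)_0=1$ and $y_{\lambda}=\prod_i (q)_{\lambda_i}$, the monomial $y_0^{n-\ell(\lambda)}y_{\lambda}t^k$ becomes $\bigl(\prod_i (q)_{\lambda_i}\bigr)\partial_x^k$. Comparing coefficients of $\partial_x^k$ on both sides (the powers of $\partial_x$ are linearly independent as operators on $\ZZ[x]$, exactly as in the uniqueness argument of Theorem~\ref{T:univpolys:univprop:Und}) gives $\pS{n}{k}_{q,d}=\sum_{\lambda\vdash nd-k,\ \ell(\lambda)\le n-1} c_{\lambda}^{n,d}\prod_i (q)_{\lambda_i}$, as claimed.

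\textbf{Main obstacle.} None of the individual steps is deep; the one point requiring a little care is the hypothesis $q\ge d$, which is needed so that the prefactor on the left of \eqref{E:def:Stirling} is $x^{(q-d)n}$ (a nonnegative power of $x$) rather than a power of $\partial_x$ on the right. When $q\ge d$ this prefactor simply disappears upon setting $x=1$, and the comparison of normally ordered forms is clean. Were $q<d$ one would instead have a trailing $\partial_x^{(d-q)n}$, and the bookkeeping of the index $k$ would shift; restricting to $q\ge d$ sidesteps this entirely. I would also note in passing, as in the $d=1$ case, that $y_0$ specializes to $(q)_0=1$, which is consistent with $y_0$ always dividing $U_{n,d}$ for $n>0$, so the exponent $n-\ell(\lambda)$ on $y_0$ is harmless.
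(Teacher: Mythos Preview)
Your proposal is correct and follows essentially the same approach as the paper's proof: specialize Theorem~\ref{T:univpolys:univprop:Und} to $A=\ZZ[x]$, $h=x^q$, compare with \eqref{E:def:Stirling}, set $x=1$ so that $(x^q)^{(i)}$ becomes $(q)_i$ and the prefactor $x^{(q-d)n}$ disappears, and read off the coefficients from \eqref{E:und:canonical}. Your write-up is in fact slightly more explicit than the paper's about the final coefficient comparison and the role of the hypothesis $q\ge d$.
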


\begin{proof}
Take $A=\ZZ[x]$ and $h=x^q$ in \eqref{E:univpolys:univprop:Und}, and compare with \eqref{E:def:Stirling}. This yields 
\[
U_{n,d}\left((x^q),(x^q)',(x^q)'',\ldots; \partial_x\right) =  x^{(q-d) n} \sum_k \pS{n}{k}_{q,d} x^k \partial_x^k.
\]
Setting $x=1$, this gives
\[
U_{n,d}\left((q)_0,(q)_1,(q)_2,\ldots; \partial_x\right) =  \sum_k \pS{n}{k}_{q,d} \partial_x^k,
\]
from which the result follows.
\end{proof}
The Stirling numbers $\pS{n}{k}_{q,d}$ are symmetric in $q$ and $d$, a fact that is not reflected by Formula \eqref{E:stirling q d}. Below we transform Formula \eqref{E:stirling q d} to make this symmetry explicit. The nice formula obtained this way seems new.
\begin{cor}
Let $q$, $d$, $n$, $k$ be nonnegative integers, with $q \ge d$. The generalized Stirling number $\pS{n}{k}_{q,d}$ is given by
\begin{equation}
\pS{n}{k}_{q,d} = \sum_{a\in\M(n)}  \frac{\prod_i (d)_{r_i(a)} \cdot \prod_j (q)_{c_j(a)}}{\prod_{i,j} (a_{i,j} !)} ,
\end{equation}
where $\M(n)$ is the set of all lower triangular arrays $a=(a_{i,j})_{1 \le j < i \le n}$ of nonnegative integers with $c_j(a)=\sum_i a_{i,j}$ (column sum) and $r_i(a)=\sum_j a_{i,j}$ (row sum).
\end{cor}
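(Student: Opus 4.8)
The plan is to combine Proposition~\ref{P:stirling q d} with part (c) of Corollary~\ref{C:closed:form:d}, and then to symmetrize. First I would recall that Proposition~\ref{P:stirling q d} gives $\pS{n}{k}_{q,d} = \sum_{\lambda} c_{\lambda}^{n,d} \prod_i (q)_{\lambda_i}$, where the sum is over partitions $\lambda \vdash nd-k$ with $\ell(\lambda) \le n-1$. Next I would substitute the expression for $c_{\lambda}^{n,d}$ from \eqref{E:inv factorials}, namely $c_{\lambda}^{n,d} = \sum_{a} \frac{1}{\prod_{i,j}(a_{i,j}!)} \prod_{i=2}^n (d)_{r_i(a)}$, where $a$ ranges over the arrays in $\M(n)$ whose nonzero column sums are the parts of $\lambda$ (up to reordering).

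The key step is then to interchange and merge the two sums. Summing over $\lambda$ and, for each $\lambda$, over the arrays $a \in \M(n)$ with nonzero column sums equal to the parts of $\lambda$, is exactly the same as summing over \emph{all} arrays $a \in \M(n)$: each array $a$ contributes to precisely one $\lambda$, namely the partition formed by its nonzero column sums, and that $\lambda$ automatically satisfies $|\lambda| = \sum_j c_j(a)$ and $\ell(\lambda) \le n-1$ (there are only $n-1$ columns). Under this reindexing, the factor $\prod_i (q)_{\lambda_i}$ that sat outside the $a$-sum becomes $\prod_j (q)_{c_j(a)}$, since $(q)_0 = 1$ means the zero column sums contribute trivially and can be included harmlessly. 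Combining everything, one obtains
\[
\pS{n}{k}_{q,d} = \sum_{a \in \M(n)} \frac{\prod_{i=2}^n (d)_{r_i(a)} \cdot \prod_{j=1}^{n-1} (q)_{c_j(a)}}{\prod_{i,j} (a_{i,j}!)},
\]
which is the claimed formula once one notes that the product ranges $i=2,\dots,n$ and $j=1,\dots,n-1$ can be written as $\prod_i$ and $\prod_j$ without ambiguity given the index set $1 \le j < i \le n$ of $\M(n)$, and that rows $i$ for which $r_i(a) = 0$ (in particular there is no row $i=1$) contribute $(d)_0 = 1$.

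I do not anticipate a serious obstacle: the argument is essentially a bookkeeping exercise showing that the double sum "over $\lambda$, then over $a$ realizing $\lambda$" collapses to a single sum over $\M(n)$. The one point requiring a little care is confirming that the constraint $\ell(\lambda) \le n-1$ in \eqref{E:stirling q d} is automatically respected — it is, because an array in $\M(n)$ has exactly $n-1$ columns, so at most $n-1$ of them can be nonzero — and that no array is counted for the wrong $k$, which holds since $k = nd - |\lambda| = nd - \sum_j c_j(a)$ is determined by $a$. The symmetry of the right-hand side under transposition of the array (swapping the roles of $q,d$ and of row/column sums) is then manifest, which was the point of the reformulation; the hypothesis $q \ge d$ is inherited from Proposition~\ref{P:stirling q d} but the final formula itself is visibly symmetric, consistent with the known symmetry $\pS{n}{k}_{q,d} = \pS{n}{k}_{d,q}$.
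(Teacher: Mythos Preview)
Your proposal is correct and follows exactly the paper's approach: the paper's proof is the single sentence ``This is the immediate consequence of Proposition~\ref{P:stirling q d} and Formula~\eqref{E:inv factorials},'' and your write-up simply spells out the bookkeeping behind that sentence. Your observations about why the constraint $\ell(\lambda)\le n-1$ is automatic and how the factor $\prod_i (q)_{\lambda_i}$ becomes $\prod_j (q)_{c_j(a)}$ are the right details to fill in.
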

\begin{proof}
This is the immediate consequence of Proposition \ref{P:stirling q d} and Formula \eqref{E:inv factorials}.
\end{proof}

It is natural to ask for a combinatorial interpretation of the right-hand side of Formula \eqref{E:stirling q d}. We will see that we recover A.\ Varvak's description (\cite[Cor.\ 3.2]{Varvak}) of the generalized Stirling numbers $\pS{n}{k}_{q,d}$. 
We recall that a \emph{partial bijection} from a set $A$ to a set $B$ is a bijection $g$ from some subset $\Dom(g)$ of $A$ to some subset of $B$.

\begin{cor}\label{C:comb:gSqd}
Let $n$, $k$, $d$, $q$ be nonnegative integers, with $q \ge d$. The generalized Stirling number $\pS{n}{k}_{q,d}$ counts the partial bijections  $g$ from $[n] \times [d]$ to $[n] \times [q]$ with the property
\[
\forall (i,a,j,b) \in [n]\times[d]\times[n]\times[q], \quad g(i,a)=(j,b) \Rightarrow j < i,
\]
and such that $\#\Dom(g)=nd-k$.
\end{cor}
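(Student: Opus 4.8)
The plan is to combine Proposition~\ref{P:stirling q d} with the combinatorial interpretation of $c_\lambda^{n,d}$ furnished by Proposition~\ref{P:PD:d}, building each factor $(q)_{\lambda_i}$ into an enlargement of the target set of the partial maps. Starting from \eqref{E:stirling q d}, the sum is over partitions $\lambda$ of $nd-k$ of length at most $n-1$, and $c_\lambda^{n,d}$ counts the partial maps $g_0\colon [n]\times[d]\rightharpoonup [n]$ with $g_0(i,a)<i$ whose fibers have sizes the parts of $\lambda$. Write the fibers as $g_0^{-1}(\{1\}),\ldots,g_0^{-1}(\{n\})$; if the fiber over $j$ has size $s_j$, then $\prod_i (q)_{\lambda_i}=\prod_{j=1}^{n}(q)_{s_j}$. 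Now $(q)_{s_j}$ counts the injections from $g_0^{-1}(\{j\})$ into $[q]$, equivalently the ways of attaching to each element of the fiber over $j$ a distinct second coordinate in $[q]$, chosen independently across the different values $j$. Packaging $g_0$ together with this collection of injections produces exactly a partial bijection $g\colon[n]\times[d]\rightharpoonup[n]\times[q]$ with $\Dom(g)=\Dom(g_0)$, where $g(i,a)=(g_0(i,a),b)$ and $b$ is the chosen second coordinate; the injectivity within each fiber is precisely the condition that $g$ be injective overall, and $g_0(i,a)<i$ becomes the stated constraint $j<i$. Since $\#\Dom(g_0)=|\lambda|=nd-k$, this matches the cardinality condition.

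So the key steps, in order, would be: first invoke Proposition~\ref{P:stirling q d} to reduce to \eqref{E:stirling q d}; second, for a fixed $\lambda$, use Proposition~\ref{P:PD:d} to replace $c_\lambda^{n,d}$ by the count of type-$\lambda$ partial maps $[n]\times[d]\rightharpoonup[n]$; third, interpret $\prod_i (q)_{\lambda_i}$ fiberwise as a product of falling factorials indexed by the fibers of such a map, counting independent injections of each fiber into $[q]$; fourth, exhibit the bijection between (type-$\lambda$ partial map, tuple of fiberwise injections) pairs and partial bijections $g\colon[n]\times[d]\rightharpoonup[n]\times[q]$ satisfying the displayed monotonicity condition and having $\#\Dom(g)=nd-k$; fifth, sum over $\lambda$ to see that every such partial bijection arises exactly once, with $\lambda$ recovered as the multiset of fiber sizes of the first-coordinate map $(i,a)\mapsto j$. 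One should double-check that the length constraint $\ell(\lambda)\le n-1$ in \eqref{E:stirling q d} is automatic here: the first coordinate $j=g_0(i,a)$ never equals $n$ since $j<i\le n$, so at most $n-1$ distinct values occur, hence $\lambda$ has at most $n-1$ parts, matching the range of summation and confirming no partial bijections are left out or double-counted.

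The main obstacle is bookkeeping rather than any genuine difficulty: one must be careful that the ``independence across $j$'' in choosing the injections is correct — the injections on different fibers have disjoint domains and may freely overlap or not in their images, and it is exactly the union of these injections (together with the constraint that within a single fiber the second coordinates are distinct) that is equivalent to global injectivity of $g$, because two pairs $(i,a),(i',a')$ with $g(i,a)=g(i',a')$ must in particular have the same first coordinate $j$, i.e.\ lie in the same fiber. Making this equivalence airtight, and confirming that the map ``$g\mapsto(\text{first-coordinate map},\text{fiberwise injections})$'' is a genuine inverse, is the one place where care is needed; everything else is a transcription of Proposition~\ref{P:PD:d} and the definition of the falling factorial. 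I would also remark that this matches \cite[Cor.~3.2]{Varvak} after translating partial bijections between product sets into rook placements on the relevant board, but I would not carry out that translation in detail.
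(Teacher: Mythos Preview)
Your proposal is correct and follows essentially the same route as the paper: invoke Proposition~\ref{P:stirling q d}, interpret $c_\lambda^{n,d}$ via Proposition~\ref{P:PD:d} as counting type-$\lambda$ partial maps $g_0\colon[n]\times[d]\rightharpoonup[n]$, read $\prod_i(q)_{\lambda_i}$ as the number of maps $\phi\colon\Dom(g_0)\to[q]$ injective on each fiber, and observe that the pair $(g_0,\phi)$ is equivalent to an injective map $(i,a)\mapsto(g_0(i,a),\phi(i,a))$ into $[n]\times[q]$. Your write-up is in fact more careful than the paper's (you explicitly verify the length bound $\ell(\lambda)\le n-1$ and spell out why fiberwise injectivity is equivalent to global injectivity), but the argument is the same.
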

\begin{proof}
The number $c_{\lambda}^{n,d}$ counts the partial maps $g: [n] \times [d] \rightarrow [n]$ that are in $\PD{n,d}$, and  whose fibers have cardinalities equal to the parts of $\lambda$. For each such $g$, the product $\prod_i (q)_{\lambda_i}$ counts the maps $\phi$ from $\Dom(g)$ to $[q]$ that are injective on each fiber of $g$. They are exactly the maps $\phi: \Dom(g) \rightarrow [q]$ such that the map $(i,j)\in \Dom(g) \mapsto (g(i,j),\phi(i,j)) \in [n] \times [q]$ is injective.  
The corollary follows from this.
\end{proof}

The bijections 
\[
\begin{array}{rclcrcl}
[n] \times [d] &\longrightarrow & [nd]  &\textrm{\ and\ }   & [n] \times [q] &\longrightarrow & [nq] \\
(i,a)          & \longmapsto& (i-1)d+a && (j, b)                & \longmapsto& (j-1)q+b
\end{array}
\]
turn the graphs of the partial bijections from Corollary~\ref{C:comb:gSqd} into the rook placements of $nd-k$ rooks in a chessboard whose shape is the Young diagram of the partition whose parts are $q$, $2 q$, \ldots, $(n-1)q$, $nq$, each with multiplicity $d$.

Therefore, the Stirling number $\pS{n}{k}_{q,d}$ counts these rook placements. This description of $\pS{n}{k}_{q,d}$ is the one given in \cite[Cor.\ 3.2]{Varvak}.

\subsection{The coefficients $c_{\lambda}^{n,d}$ modulo a prime $p$, for $d=p^m$}\label{SS:modp:Und}

Theorem \ref{T:modp} admits the following generalization.

\begin{thm}\label{T:modp:Und}
Let $p$ be a prime and assume that $d$ is a power of $p$. Given a partition $\lambda$, let $d{\mkern -1mu\cdot\mkern -1mu}\lambda$ be the partition obtained by multiplying every part of $\lambda$ by $d$. Then:
\begin{enumerate}[label=\textup{(\alph*)}]
\item $c^{n, d}_{d{\mkern -1mu\cdot\mkern -1mu}\lambda} \equiv c^{n}_\lambda\ (\modd p)$;
\item $c^{n, d}_\mu \equiv 0\ (\modd p)$ if $\mu\neq d{\mkern -1mu\cdot\mkern -1mu}\lambda$, for all $\lambda$.
\end{enumerate}
In particular, if both $n$ and $d$ are powers of $p$, then $c^{n, d}_\mu \equiv 0\ (\modd p)$, as long as $|\mu|\neq d(n-1)$ and $|\mu|$ is not a multiple of $dp$.
\end{thm}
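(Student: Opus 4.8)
The plan is to mimic the group-action argument used in the proof of Theorem \ref{T:modp}, but now acting on the set $\T{n}^d$ that indexes the expansion \eqref{E:Und:Tnd} of $U_{n,d}$, rather than on $\T{n}$. For part (a), I would first establish a finer statement: that the reduction mod $p$ of $U_{n,d}$ equals the reduction mod $p$ of the polynomial obtained from $U_n$ by the substitution $y_i \mapsto y_i$ composed with the ``dilation" $t \mapsto t$, $y_i \mapsto y_{i}$ — more precisely, I claim $U_{n,d} \equiv \Phi_d(U_n) \pmod p$, where $\Phi_d$ is the $R$-linear map sending the monomial $y_\lambda t^k$ to $y_{d{\mkern -1mu\cdot\mkern -1mu}\lambda}$ with $t$-degree unchanged in the appropriate graded sense. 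Unpacking \eqref{E:Und:Tnd}, this amounts to: when $d=p^m$, the contribution mod $p$ of a tuple $F=(T_1,\ldots,T_d)\in\T{n}^d$ survives only when all the outdegree data of $F$ is ``$d$ times" the outdegree data of a single tree. To see this, let $\ZZ_d$ act on $\T{n}^d$ by cyclically permuting the $d$ coordinates; since $d$ is a power of $p$, every orbit has size a power of $p$, hence is $\equiv 0 \pmod p$ unless it is a fixed point, i.e.\ unless $T_1=\cdots=T_d=T$. For such a diagonal tuple, $\ch(i;F)=d\cdot\ch(i;T)$ for every $i$, so it contributes $\left(\prod_{i=1}^n y_{d\cdot\ch(i;T)}\right)t^{d\cdot\ch(0;T)}$, and summing over $T\in\T{n}$ recovers $\Phi_d(U_n)$ by \eqref{E:trees}. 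Comparing coefficients via \eqref{E:und:canonical} and \eqref{E:un:canonical} gives both (a) and (b) at once: the coefficient $c^{n,d}_\mu$ is $\equiv 0$ unless $\mu = d{\mkern -1mu\cdot\mkern -1mu}\lambda$ for some $\lambda$, in which case it is $\equiv c^n_\lambda \pmod p$.

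For the final assertion, I combine (a), (b) with Theorem \ref{T:modp}. Suppose $n$ and $d$ are both powers of $p$ and $c^{n,d}_\mu \not\equiv 0 \pmod p$. By (b), $\mu = d{\mkern -1mu\cdot\mkern -1mu}\lambda$ for some partition $\lambda$, and by (a), $c^n_\lambda \not\equiv 0 \pmod p$. Since $n=p^m$ is a prime power, Theorem \ref{T:modp} forces $|\lambda| = n-1$ or $|\lambda|$ to be a multiple of $p$. In the first case $|\mu| = d|\lambda| = d(n-1)$; in the second case $|\mu| = d|\lambda|$ is a multiple of $dp$. This is exactly the contrapositive of the claimed statement, so we are done.

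The main obstacle I anticipate is being careful about the bookkeeping of gradings when transferring the mod-$p$ identity from $\T{n}^d$-indexed sums to the canonical form \eqref{E:und:canonical}: one must check that the diagonal tuples contribute precisely to the $c^{n,d}_{d{\mkern -1mu\cdot\mkern -1mu}\lambda}$ slot (in particular that $t$-degree $d\cdot\ch(0;T) = nd - |d{\mkern -1mu\cdot\mkern -1mu}\lambda|$ and $y_0$-exponent $n-\ell(d{\mkern -1mu\cdot\mkern -1mu}\lambda) = n - \ell(\lambda)$ line up), which follows from the homogeneity properties of $U_{n,d}$ listed before Theorem \ref{T:univpolys:univprop:Und} but should be stated explicitly. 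A secondary point worth a sentence: the $\ZZ_d$-action on $\T{n}^d$ is entirely separate from (and much simpler than) the $\ZZ_n$-action constructed in the proof of Theorem \ref{T:modp}; here we only need the trivial observation that permuting coordinates of a $d$-tuple has orbits of $p$-power size when $d$ is a $p$-power. Alternatively, one can give a purely algebraic proof of $U_{n,d}\equiv\Phi_d(U_n)\pmod p$ by induction on $n$ using the recursion \eqref{E:univpolys:d:recdef} together with the fact that, mod $p$, $(\Delta+\rho_t)^d = (\Delta+\rho_t)^{p^m} \equiv \Delta^{p^m}+\rho_t^{p^m} = \Delta^d + \rho_t^d$ as operators (a consequence of the Frobenius/freshman's-dream identity, valid because $\Delta$ and $\rho_t$ need not commute but the cross terms in the expansion of $(\Delta+\rho_t)^{p}$ vanish mod $p$ for any two elements of an $\FF_p$-algebra — one should verify this last claim, as it is the genuinely delicate step); I would present whichever of the two proofs turns out cleaner, probably the combinatorial one since the cyclic action is transparent.
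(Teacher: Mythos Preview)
Your proposal is correct and follows essentially the same approach as the paper: the cyclic $\ZZ_d$-action on $\T{n}^d$ by coordinate permutation, the identification of fixed points as diagonal tuples contributing $\left(\prod_i y_{d\cdot\ch(i;T)}\right)t^{d\cdot\ch(0;T)}$, and the deduction of (a), (b), and the final statement via Theorem~\ref{T:modp} all match the paper's argument. One minor correction to your aside on the alternative algebraic route: $\Delta$ and $\rho_t$ in fact commute (because $\Delta(t)=0$, so $\Delta(wt)=\Delta(w)t$), hence $(\Delta+\rho_t)^d\equiv\Delta^d+\rho_{t^d}\pmod p$ holds for the easy reason, and your worry about noncommuting elements---where the freshman's dream genuinely fails---does not arise here.
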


\begin{proof}
The cyclic group $\ZZ_d$ acts on on the set $\T{n}^d$ of $d$-tuples of increasing trees $(T_1, \ldots, T_d)$ by cyclic place permutation. The singular orbits are those of the form $(T, T, \ldots, T)$, for $T\in\T{n}$. If $d$ is a power of a prime $p$, then all other orbits have size divisible by $p$. Thus, 
\begin{equation}\label{E:Und:Tnd:dpp}
U_{n, d} \equiv \sum_{T \in \T{n}}  \left( \prod_{i=1}^{n} y_{d\ch(i;T)}   \right)  t^{d\ch(0;T)}\ \ (\modd p).
\end{equation}
We deduce (a) and (b) by equating coefficients in \eqref{E:Und:Tnd:dpp}.
The last statement of the theorem follows from the above and Theorem \ref{T:modp}.
\end{proof}

\begin{remark}
Observe that Theorem~\ref{T:modp:Und} above correctly computes the congruence classes modulo $3$ of all the coefficients $c^{3, 3}_\lambda$, knowing only that $c^3_\emptyset=c^3_{(2)}=c^3_{(1^2)}=1$ (see Example~\ref{Ex:c33}).
\end{remark}


\section{Normal ordering in formal differential operator rings}\label{S:nord:fdor}

It is well known that, over the polynomial ring $\FF[x]$, if the base field $\FF$ has characteristic $0$, then the operators $\partial_x$ and $x$ (multiplication by $x$) generate the Weyl algebra $\A_1=\FF\langle x, y\mid [y, x]=1\rangle$. Hence, the normally ordered form
of the operator $(h \partial^d)^n$, which we have been discussing, yields in particular known expressions for the normally ordered form of elements of this type in the Weyl algebra, with $\partial=\partial_x$ and $h\in\FF[x]$ (see \cite{Blasiak:thesis}, \cite{BlasiakFlajolet} and \cite{MansourSchork}). In this section we will apply our results to the more general setting of formal differential operator rings, which include in particular the subalgebras $\A_h$ of the Weyl algebra studied in \cite{blo13, blo15ja, blo15tams} and defined below in \eqref{E:def:Ah}. 

Recall that a \emph{formal differential operator ring} (or \emph{skew polynomial ring}) is a ring, denoted $A[z; \partial]$, constructed from a ring $A$ and a derivation $\partial$ of $A$, whose elements can be expressed as polynomials $\sum_{i=0}^n a_i z^i$ in a new variable $z\in A[z; \partial]$, with $n\geq 0$ and uniquely determined coefficients $a_i\in A$. Thus, as a left $A$-module, $A[z; \partial]=\bigoplus_{i\geq 0}Az^i$ is free with basis $\{ z^i\}_{i\geq 0}$ and can be identified as such with the polynomial ring $A[z]$. However, the variable $z$ does not necessarily commute with the coefficients from $A$ and multiplication in $A[z, \partial]$ is determined by the multiplication in $A$, associativity, the distributive law and the commutation relation $za=az+\partial(a)$, for all $a\in A$. In particular, the adjoint map $[z, -]:A\longrightarrow A$ given by the commutator $[z,a]=za-az$ for $a\in A$ coincides with the derivation $\partial$. See \cite{GW89} or \cite{MR00} for more details.

An important example of a formal differential operator ring is the first Weyl algebra $\A_1=\FF[x]\left[y;\partial_x \right]$ over the field $\FF$ where, in case $\chara(\FF)=0$, $x$ can be identified with left multiplication by $x$ on $\FF[x]$ and $y$ with $\partial_x$. In case $\chara(\FF)=p>0$, then this correspondence does not give a faithful representation of $\A_1$ on $\FF[x]$ because, as an operator, $\partial_x^p=0$, whereas in $\A_1$ we have $y^p\neq 0$. Another example is given by the family of algebras $\A_h=\FF[x][\hat y;\delta]$, studied in \cite{blo13, blo15ja, blo15tams} and determined by the derivation $\delta=h\partial_x$, where $h\in\FF[x]$. Then $\A_h$ admits the presentation 
\begin{equation}\label{E:def:Ah}
\A_h=\FF\langle x, \hat y\mid [\hat y, x]=h(x)\rangle. 
\end{equation}
By taking $h=1$ we retrieve the Weyl algebra $\A_1$ and, for an arbitrary nonzero $h\in\FF[x]$, we can view $\A_h$ as the (unital) $\FF$-subalgebra of $\A_1$ generated by $x$ and $yh$, under the identification $\hat y=yh$. It is clear that, as $h$ ranges over $\FF[x]$, the family of algebras $\A_h$ runs through all formal differential operator rings over $\FF[x]$.

As before, let $R=\ZZ[y_i; i\geq 0]$ and $\Delta=\sum_{i\geq 0}y_{i+1}\partial_{y_i}$ be the derivation of $R$ satisfying $\Delta(y_i)=y_{i+1}$ for all $i\geq 0$. The differential operator ring $R[z;\Delta]$ can also be seen as the ring generated by the commuting variables $y_i$, for $i \in \ZZ$, and a new (non-central) variable $z$ satisfying the commutation relations $z y_i=y_i z+y_{i+1}$, for all $i\geq 0$. In fact, it is natural to view $R[z;\Delta]$ as a homomorphic image of $R\langle t \rangle$ under the epimorphism $R\langle t \rangle\twoheadrightarrow R[z;\Delta]$ sending $y_i$ to $y_i$ and $t$ to $z$. Under this map we can, and will, think of the polynomials $U_{n, d}$ as elements of $R[z;\Delta]$, substituting $z$ for $t$. Fixing a base field $\FF$ (of arbitrary characteristic), we can also consider the corresponding $\FF$-algebras $R_\FF:=\FF\otimes_\ZZ R=\FF[y_i; i\geq 0]$ and $\FF\otimes_\ZZ R[z;\Delta]=R_\FF[z; \Delta]$.

The main result of this section, presented below, shows that the polynomials $U_{n, d}$ give the normally ordered form of certain elements over a formal differential operator ring.

\begin{thm}\label{T:gen:d}
Let $A$ be a ring, $h$ a central element of $A$ and $\partial$ a derivation of $A$. Then, for all $n, d\geq 1$ we have the following normal ordering identity in the formal differential operator ring $A[z; \partial]$:
\begin{align}\label{E:gen:d:comp}
(h z^d)^n= \eval{U_{n, d}}{y_i=h^{[i]},\, t=z}
=\sum_{k=d}^{nd}\sum_{\lambda\vdash nd-k} c^{n, d}_\lambda h^{n-\ell(\lambda)}h^{[\lambda]}z^{k},
\end{align}
where $h^{[\lambda]}=h^{[\lambda_1]}\cdots h^{[\lambda_\ell]}$, for $\lambda=(\lambda_1, \ldots, \lambda_\ell)$.
\end{thm}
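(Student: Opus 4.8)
The plan is to recognize that Theorem~\ref{T:gen:d} is essentially a restatement of Theorem~\ref{T:univpolys:univprop:Und} in the specific setting where the ambient ring is a formal differential operator ring $A[z;\partial]$. The key observation is that inside $A[z;\partial]$ the element $z$ acts, via the commutation relation $za = az + \partial(a)$, exactly as the abstract derivation $\partial$ acts on $A$; that is, the adjoint map $[z,-]$ restricted to $A$ coincides with $\partial$. So the first step is to make precise the evaluation homomorphism: apply the ring homomorphism $R\langle t\rangle \to A[z;\partial]$ sending $y_i \mapsto h^{[i]}$ (thought of as the element $h^{[i]} \in A \subseteq A[z;\partial]$) and $t \mapsto z$. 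This is well-defined because the $h^{[i]}$ commute (as $h$ is central and $\partial$ stabilizes the center), exactly as in the proof of Theorem~\ref{T:univpolys:univprop}.

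Next I would verify that the analogue of the ``skew relation'' \eqref{E:univpolys:skewrel} holds in this new setting, i.e.\ that for this evaluation map $\mathrm{ev}$ into $A[z;\partial]$ one has $z \cdot \mathrm{ev}(w) = \mathrm{ev}((\Delta + \rho_t)(w))$ for all $w \in R\langle t\rangle$. As before it suffices to check this on the generators $t$ and $y_i$: for $w = t$ it is trivial since $\Delta(t)=0$ and $z\cdot z = z\cdot z$; for $w=y_i$ it is the identity $z\cdot h^{[i]} = h^{[i]}\cdot z + h^{[i+1]}$ in $A[z;\partial]$, which is precisely the defining commutation relation $z a = a z + \partial(a)$ applied to $a = h^{[i]}$. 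With this in hand, the identity $(hz^d)^n = \mathrm{ev}(U_{n,d})$ follows by the same induction on $n$ used in Theorem~\ref{T:univpolys:univprop:Und}, using $(hz^d)^{n+1} = h\cdot z^d\cdot \mathrm{ev}(U_{n,d}) = h\cdot \mathrm{ev}((\Delta+\rho_t)^d(U_{n,d})) = \mathrm{ev}(y_0(\Delta+\rho_t)^d U_{n,d}) = \mathrm{ev}(U_{n+1,d})$, where the iterate $(\Delta+\rho_t)^d$ is handled exactly as in the passage from \eqref{E:univpolys:skewrel} to \eqref{E:univpolys:skewrel:d}.

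Finally, to obtain the explicit closed form on the right-hand side of \eqref{E:gen:d:comp}, I would simply substitute the canonical expansion of $U_{n,d}$ from Proposition~\ref{P:recrel:d}, namely \eqref{E:und:canonical}, into $\mathrm{ev}(U_{n,d})$: each monomial $y_0^{n-\ell(\lambda)} y_\lambda t^k$ maps to $h^{n-\ell(\lambda)} h^{[\lambda]} z^k$ with coefficient $c^{n,d}_\lambda$, giving the displayed double sum.

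The only genuinely new point relative to the earlier theorems is that the evaluation target is $A[z;\partial]$ rather than $\operatorname{End}(A)$, so I should remark that there is a uniform way to see both: one can regard $A[z;\partial]$ itself as acting faithfully on $A[z;\partial]$ (or note that $A[z;\partial]$ maps to $\operatorname{End}$ of a suitable module), but the cleanest route is simply to redo the short induction directly in $A[z;\partial]$ using its defining relation, as sketched above. I expect no real obstacle here; the main (minor) care needed is bookkeeping in the iterate $(\Delta+\rho_t)^d$ and confirming that the homomorphism property of $\mathrm{ev}$ is not disturbed by the noncommutativity of $z$ with the $y_i$'s — but this is exactly the reason $R\langle t\rangle$ was set up with $t$ noncommuting, so it goes through verbatim.
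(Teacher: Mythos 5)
Your proof is correct, but it takes a genuinely different route from the paper's. You redo the inductive argument of Theorem~\ref{T:univpolys:univprop} directly inside $A[z;\partial]$: you define the evaluation homomorphism $R\langle t\rangle\to A[z;\partial]$, verify the skew relation $z\cdot\operatorname{ev}(w)=\operatorname{ev}((\Delta+\rho_t)(w))$ on generators via the defining commutation $za=az+\partial(a)$ (and the multiplicative extension to all of $R\langle t\rangle$ goes through exactly as in the operator setting), and then iterate $d$ times and induct on $n$. The paper instead proceeds in two stages: it first establishes the identity in the single ``universal'' ring $R[z;\Delta]$ with $h=y_0$, by invoking Theorem~\ref{T:univpolys:univprop:Und} for the operator $(y_0\Delta^d)^n$ on $R$ together with the injectivity of the representation $\rho:R[z;\Delta]\to\operatorname{End}_{\ZZ}(R)$ (which reduces to the uniqueness argument of Theorem~\ref{T:univpolys:univprop}), and then transports the identity to an arbitrary $A[z;\partial]$ through the ring homomorphism $\phi:R[z;\Delta]\to A[z;\partial]$, $y_i\mapsto h^{[i]}$, $z\mapsto z$. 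The paper's approach buys reuse of the earlier theorem as a black box and makes the universality of $R[z;\Delta]$ explicit at the cost of a faithfulness check; yours is more self-contained and avoids that check at the cost of repeating the induction. Both are complete, and your final substitution of the canonical expansion \eqref{E:und:canonical} to get the displayed double sum matches the paper's.
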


\begin{proof}
We will first prove the special case of the Theorem for the ring $R$, the derivation $\Delta$ and the central element $h=y_0\in R$ and then deduce the general case.  

There is a representation $\rho: R[z; \Delta]\longrightarrow \operatorname{End}_\ZZ(R)$ sending $r\in R$ to the corresponding left multiplication by $r$ endomorphism, which we still denote by $r$, and sending $z$ to $\Delta$. This is a ring endomorphism precisely because $\Delta$ is a derivation of $R$. 

Then, by Theorem~\ref{T:univpolys:univprop:Und}, we have the identity 
\begin{equation*}
\rho\left( \left(y_0z^d\right)^n\right)=\left(y_0\Delta^d\right)^n = \eval{U_{n, d}}{t=\Delta}  = \rho\left( \eval{U_{n, d}}{t=z} \right)
\end{equation*}
in $\operatorname{End}_\ZZ(R)$, as $y_0^{[i]}=y_i$. Hence, it remains to show that $\rho$ is injective. Suppose $\rho\left(\sum_{i=0}^{m}P_{i} z^i\right)=0$. Then $\sum_{i=0}^{m}P_{i} \Delta^i=0$ as an endomorphism of $R$. By the uniqueness argument from the proof of Theorem~\ref{T:univpolys:univprop}, we conclude that $P_i=0$ for all $i$ and so $\sum_{i=0}^{m}P_{i} z^i=0$, thus establishing the injectivity of $\rho$.

Now consider the general case of the Theorem, with $A$, $h$ and $\partial$ as in the statement. There is a ring homomorphism $\phi:R[z; \Delta]\longrightarrow A[z; \partial]$ defined on the generators by $y_i\mapsto h^{[i]}$ and $z\mapsto z$. This is well defined precisely because $h^{[i]}$ and $h^{[j]}$ commute, for all $i, j\geq 0$, and by the multiplication rule in a formal differential operator ring. Applying $\phi$ to the identity $\left(y_0z^d\right)^n=\eval{U_{n, d}}{t=z}$ yields precisely $\left(hz^d\right)^n=\eval{U_{n, d}}{y_i=h^{[i]},\, t=z}$. The right-hand side of \eqref{E:gen:d:comp} follows directly from \eqref{E:und:canonical}.
\end{proof}

Fix a field $\FF$. For $0\neq h\in\FF[x]$, view the algebra $\A_h$ defined in \ref{E:def:Ah} as a subalgebra of the Weyl algebra $\A_1$ by identifying $\hat y\in\A_h$ with $yh\in\A_1$, as explained above. 

Then $\A_h$ is a free (left) $\FF[x]$-module and each of the following is a free basis (compare~\cite{blo15tams}): 
\begin{equation}\label{E:Ah:bases}
\left( \hat y^n\right)_{n\geq 0},\qquad \left( (hy)^n\right)_{n\geq 0}\qquad \mbox{and}\quad \left( h^n y^n\right)_{n\geq 0}. 
\end{equation}
In fact, fixing $n\geq 0$, each of $\left( \hat y^k\right)_{0\leq k\leq n}$, $\left( (hy)^k\right)_{0\leq k\leq n}$, and $\left( h^k y^k\right)_{0\leq k\leq n}$ spans the same (left) $\FF[x]$-submodule of $\A_h$. 

The Weyl algebra identity 
\begin{equation}\label{E:basis:trans:hyn:hkyk}
(hy)^n=\sum_{k=1}^n\sum_{\lambda\vdash n-k} c^n_\lambda h^{n-k-\ell(\lambda)}h^{(\lambda)}h^{k}y^{k},
\end{equation}
which follows by direct application of Theorem~\ref{T:gen:d}, relates the second and third bases in \eqref{E:Ah:bases}. Furthermore, the relation $(hy)^{n+1}=h(yh)^ny=h(\hat y^n) y$ and the fact that $\A_1$ is a domain yield
\begin{equation}\label{E:basis:trans:hatyn:hkyk}
\hat y^n=\sum_{k=0}^n\sum_{\lambda\vdash n-k} c^{n+1}_\lambda h^{n-k-\ell(\lambda)}h^{(\lambda)}h^{k}y^k ,
\end{equation}
relating the first and the third bases in \eqref{E:Ah:bases}.

\begin{exam}
Take $h=x$. The algebra $\A_x$ is the universal enveloping algebra of the non-abelian $2$-dimensional Lie algebra. We have, by the binomial theorem:
\begin{equation*}
\hat y^n=(yx)^n= (xy+1)^n=\sum_{k=0}^n {n\choose k}(xy)^k.
\end{equation*}
On the other hand, we can also apply the formula \eqref{E:basis:trans:hatyn:hkyk}, taking into account that $h^{(\lambda)}=0$ unless $\lambda=1^\ell$ is a partition with no parts greater than $1$. Thus,
\begin{equation}\label{Exam:Ax:haty}
\hat y^n=\sum_{k=0}^n c^{n+1}_{1^{n-k}} x^{k}y^k=\sum_{k=0}^n \left\{ {n+1\atop k+1}\right\} x^{k}y^k,
\end{equation}
yielding the identity
\begin{equation*}
\sum_{k=0}^n {n\choose k}(xy)^k= \sum_{k=0}^n \left\{ {n+1\atop k+1}\right\} x^{k}y^k.
\end{equation*}
Now, using \eqref{E:basis:trans:hyn:hkyk} we get, for $k\geq 1$,
\begin{equation*}
(xy)^k=\sum_{j=1}^k c^k_{1^{k-j}} x^{j}y^{j} =\sum_{j=1}^k \left\{ {k\atop j}\right\} x^{j}y^{j}.
\end{equation*}
Therefore, we deduce the identity
\begin{align*}
\hat y^n &= 1+\sum_{k=1}^n {n\choose k}(xy)^k=1+\sum_{k=1}^n \sum_{j=1}^k {n\choose k}\left\{ {k\atop j}\right\} x^{j}y^{j}\\&=1+\sum_{j=1}^n \left( \sum_{k=j}^n  {n\choose k}\left\{ {k\atop j}\right\} \right)x^{j}y^{j}.
\end{align*}
Comparing with \eqref{Exam:Ax:haty} we obtain the identity 
\begin{equation}\label{Exam:Ax:binomstirling}
\left\{ {n+1\atop k+1}\right\}= \sum_{j=k}^n  {n\choose j}\left\{ {j\atop k}\right\},
\end{equation}
which appears in \cite[Sec.\ 1.2.6 (52)]{dK97}. 

In terms of the algebra $\A_x$, equation \eqref{Exam:Ax:binomstirling} is a consequence of the change of basis relations between the $\FF[x]$-bases of $\A_x$ mentioned in \eqref{E:Ah:bases}. Indeed, the matrices $A_n=\left( a_{i, j} \right)_{0\leq i, j\leq n}$, $B_n=\left( b_{i, j} \right)_{0\leq i, j\leq n}$ and $C_n=\left( c_{i, j} \right)_{0\leq i, j\leq n}$, with $a_{i, j}=\left({j\atop i}\right)$, $b_{i, j}=\left\{ {j\atop i}\right\}$ and $c_{i, j}=\left\{ {j+1\atop i+1}\right\}$, are the transition matrices from $\left( \hat y^k\right)_{0\leq k\leq n}$ to $\left( (xy)^k\right)_{0\leq k\leq n}$, from $\left( (xy)^k\right)_{0\leq k\leq n}$ to $\left( x^k y^k\right)_{0\leq k\leq n}$ and from $\left( \hat y^k\right)_{0\leq k\leq n}$ to $\left( x^k y^k\right)_{0\leq k\leq n}$, respectively. Thus, \eqref{Exam:Ax:binomstirling} is just the statement that
\begin{equation*}
 \left( \left\{ {j\atop i}\right\} \right)_{0\leq i, j\leq n} \cdot \left( \left({j\atop i}\right) \right)_{0\leq i, j\leq n}=
 \left( \left\{ {j+1\atop i+1}\right\} \right)_{0\leq i, j\leq n},\qquad \mbox{for all $n\geq 0$.}
\end{equation*}


\end{exam}


\section{Final remarks}\label{S:fr}

\paragraph{Effective calculation of the formulas.}
Flajolet and Blasiak remarked in \cite[appx.\ A]{BlasiakFlajolet} that ``the symbolic problem [of calculating $U_n$] is indeed difficult''. Still, they provide a recipe for computer calculation, but it involves several steps, among them taking the compositional inverse of a power series by Lagrange inversion and extracting coefficients from powers of power series. The various descriptions of the $U_n$ given in Section \ref{S:combint} show that this calculation can be done in simpler ways. 

\paragraph{Noncommutative setting.} It is possible to drop the hypothesis ``$h$ central''. We obtain that there is a unique polynomial $V_n(y_{0},y_{1},\ldots;t)$ in non-commuting variables $y_i$, such that for any ring $A$, derivation $\partial$ of $A$  and element $h \in A$, $(h \partial)^n=V_n(h,h^{[1]},\ldots;\partial)$. For instance:
\begin{align*}
V_0 &= 1, \qquad V_1 = y_0 t, \qquad V_2 = y_0 y_1 t + y_0^2 t^2, \\
V_3 &= y_0 y_1^2 t + y_0^2 y_2 t + 2 y_0^2 y_1 t^2 + y_0 y_1 y_0 t^2+ y_0^3 t^3.
\end{align*}
Then many of the descriptions of the $U_n$ are still valid for the $V_n$ with simple changes:
\begin{itemize}
\item Inductive formula: $V_0=1, V_{n+1}= y_0(\Delta + \rho_t) V_n$, where now $\Delta=\sum_{i=1}^{\infty} D_i$, and $D_i$ is the operator on words in $y_0, y_1, y_2, \ldots$ that replaces the letter $y_j$ in position $i$ by the next letter $y_{j+1}$.
\item Umbral formula: $V_n$ is obtained from $\prod_{i=0}^{n-1} (x_i+\cdots+x_1+x_0)$ by applying the linear map sending $x_n^{\alpha_n} x_{n-1}^{\alpha_{n-1}}\cdots x_1^{\alpha_1} x_0^k$ to $y_{\alpha_n} y_{\alpha_{n-1}} \cdots y_{\alpha_1} t^k$.
\item Formula from increasing trees: 
\[
V_n = \sum_{T \in \T{n}} y_{\ch(n,T)} y_{\ch(n-1,T)} \cdots y_{\ch(1,T)} t^{ch(0,T)}.
\]
\end{itemize}
The polynomial $V_n$ has an expansion of the form 
\[
V_n = \sum c^{n}_\alpha y_0 y_{\alpha_{n-1}} \cdots y_{\alpha_1} t^{n-\sum_i \alpha_i}
\]
where the sum is carried over all sequences $\alpha = (\alpha_1,\alpha_2,\ldots,\alpha_{n-1})$ of nonnegative integers with sum at most $n$. The coefficient $c_{\alpha}^n$ interprets as the number of subdiagonal partial maps $[n] \rightarrow [n]$ with $\alpha_1$ elements mapped to $1$, $\alpha_2$ elements mapped to $2$, \ldots, $\alpha_{n-1}$ mapped to $n-1$.

The same straightforward extensions hold for the polynomials $U_{n,d}$ as well.

\paragraph{Similarities with the Fa\`a Di Bruno Formulas.}
The polynomials $U_n$ are defined by the simple formula: 
$U_n = (y_0(\Delta+\rho_t))^n(1)$, with $ \Delta=\sum_i y_{i+1} \frac{d \phantom{f}}{d yi}$.
The \emph{Fa\`a Di Bruno formulas} giving the derivatives $z^{(n)}$ of the composition 
$z=x \circ y$ of two functions, in terms of $x$ and the derivatives of $y$,
can be described similarly: for each $n$, there is a polynomial $F_n(y_1,y_2,y_3,\ldots; t)$ such that 
\[
z^{(n)} =  F_n(y',y'',y''',\ldots; \partial_u)(x) \circ y.
\]
The polynomials $F_n$ are defined by $F_n = (\Delta+ t y_1)^n(1)$.

\paragraph{Normal ordering of powers of particular differential operators.} The normal ordering of $(h \partial)^n$ or $(h \partial^d)^n$ for \emph{particular} functions $h$ (by contrast with the calculation of the \emph{symbolic} formulas $U_n$ where $h, h^{[1]}, h^{[2]} \ldots$ have to be plugged in) is a close, but different problem, for which we refer to the broad combinatorial framework designed by Blasiak and Flajolet \cite{BlasiakFlajolet}.



\end{document}